\newtheorem{thm}{Theorem}
\newtheorem{cor}[thm]{Corollary}
\newtheorem{Claim}[thm]{Claim}
\newtheorem{theorem}{Theorem}[section]
\newtheorem{proposition}[theorem]{Proposition}
\newtheorem{lemma}[theorem]{Lemma}
\newtheorem{definition}[theorem]{Definition}
\newtheorem{remark}[theorem]{Remark}
\def\irr#1{{\rm Irr}(#1)}
\def\irrr#1#2 {\irr {#1 \mid #2}}
\newcommand{\R}{\mathbb R}
\newcommand{\N}{\mathbb N}
\newcommand{\sfe}{{{\mathbb S}^{n-1}}}
\begin{document}

\title[Gaussian Brunn-Minkowski]{On the Gardner-Zvavitch conjecture: symmetry in inequalities of Brunn-Minkowski type}
\author[Alexander V. Kolesnikov, Galyna V. Livshyts]{Alexander V. Kolesnikov, Galyna V. Livshyts}
\address{National Research University Higher School of Economics, Russian Federation}
\email{sascha77@mail.ru}
\address{School of Mathematics, Georgia Institute of Technology, Atlanta, GA} \email{glivshyts6@math.gatech.edu}

\subjclass[2010]{Primary: 52} 
\keywords{Convex bodies, log-concave measures, Brunn-Minkowski inequality, Gaussian measure, Brascamp-Lieb inequality, Poincar{\'e} inequality, log-Minkowski problem}
\date{\today}
\begin{abstract}
In this paper, we study the conjecture of Gardner and Zvavitch from \cite{GZ}, which suggests that the standard Gaussian measure $\gamma$ enjoys $\frac{1}{n}$-concavity with respect to the Minkowski addition of \emph{symmetric} convex sets. We prove this fact up to a factor of 2: that is, we show that for symmetric convex $K$ and $L,$ and $\lambda\in [0,1],$
$$
\gamma(\lambda K+(1-\lambda)L)^{\frac{1}{2n}}\geq \lambda \gamma(K)^{\frac{1}{2n}}+(1-\lambda)\gamma(L)^{\frac{1}{2n}}.
$$
More generally, this inequality holds for convex sets containing the origin.
Further, we show that under suitable dimension-free uniform bounds on the Hessian of the potential, the log-concavity of even measures can be strengthened to $p$-concavity, with $p>0,$ with respect to the addition of symmetric convex sets.
\end{abstract}
\maketitle
%\tableofcontents

\section{Introduction}

Throughout this paper, we work in $n$-dimensional Euclidean space, which we denote by $\R^n$. The unit ball in $\R^n$ will be denoted by $B_2^n$ and the unit sphere by $\sfe$. The Lebesgue measure of a measurable set $A\subset \R^n$ is denoted by $|A|$. 

Recall that a Borel measure $\mu$ on $\R^n$ is called \emph{log-concave} if for every pair of Borel sets $K$ and $L$, 
\begin{equation}\label{logconc}
\mu(\lambda K+(1-\lambda) L)\geq \mu(K)^{\lambda}\mu(L)^{1-\lambda}.
\end{equation}

More generally, $\mu$ is called \emph{$p$-concave} for $p\geq 0,$ if
\begin{equation}\label{pconc}
\mu(\lambda K+(1-\lambda)L)^p\geq \lambda\mu(K)^{p}+(1-\lambda)\mu(L)^{p}.
\end{equation}
Log-concavity corresponds to the limiting case $p=0.$ By H\"older's inequality, if $p>q\geq 0,$ and a measure is $p$-concave, it is also $q$-concave.

Borell's theorem ensures that a measure with a log-concave density is log-concave \cite{bor}. Further, the celebrated Brunn-Minkowski inequality states that for all Borel sets $K$ and $L$, and for every $\lambda\in [0,1]$,
\begin{equation}\label{BM-add}
|\lambda K+(1-\lambda)L|^{\frac{1}{n}}\geq \lambda |K|^{\frac{1}{n}}+(1-\lambda)|L|^{\frac{1}{n}}.
\end{equation}
See more on the subject in Gardner's survey \cite{Gar}, and some classical textbooks in Convex Geometry, e.g.  Bonnesen, Fenchel \cite{BF}, Schneider \cite{book4}. In view of H\"older's inequality, (\ref{BM-add}) implies the log-concavity of the Lebesgue measure:
\begin{equation}\label{BM-mult}
|\lambda K+(1-\lambda)L|\geq |K|^{\lambda}|L|^{1-\lambda}.
\end{equation}
The homogeneity of the Lebesgue measure ensures that, in fact, (\ref{BM-mult}) is equivalent to (\ref{BM-add}). However, this is not the case for general (non-homogeneous) measures $\mu$ on $\R^n$: the log-concavity property (\ref{logconc}) does not imply the stronger inequality
\begin{equation}\label{dimbm}
\mu(\lambda K+(1-\lambda)L)^{\frac{1}{n}}\geq \lambda \mu(K)^{\frac{1}{n}}+(1-\lambda)\mu(L)^{\frac{1}{n}}.
\end{equation}
In fact, (\ref{dimbm}) cannot hold in general for a probability measure: if $K$ is fixed, and $L$ is shifted far away from the origin, then the left hand side of (\ref{dimbm}) is close to zero (thanks to the decay of the measure at infinity), while the right hand side
is bounded from below by a positive constant. 

Gardner and Zvavitch conjectured \cite{GZ} that for the standard Gaussian measure $\gamma$, any pair of \emph{symmetric} convex sets $K$ and $L,$ and any $\lambda\in[0,1]$, one has
\begin{equation}\label{GZ0}
\gamma(\lambda K+(1-\lambda)L)^{\frac{1}{n}}\geq \lambda \gamma(K)^{\frac{1}{n}}+(1-\lambda)\gamma(L)^{\frac{1}{n}}.
\end{equation}
In fact, initially they considered the possibility that (\ref{GZ0}) may hold for sets $K$ and $L$ containing the origin, but a counterexample to that was constructed by Nayar and Tkocz \cite{NaTk}. 

Symmetry seems to play a crucial role in the improvement of isoperimetric type inequalities. One simple example when such a phenomenon occurs is the Poincar\'e inequality (sometimes also referred to as Wirtinger's inequality): for any $C^1$-smooth $2\pi$-periodic function $\psi$ on $\R,$
\begin{equation}
\label{1dimpoincare}\frac{1}{2\pi}\int_{-\pi}^{\pi} \psi^2 \ \,dx-\left(\frac{1}{2\pi}\int_{-\pi}^{\pi} \psi \ \,dx\right)^2\leq \frac{1}{2\pi}\int_{-\pi}^{\pi} (\psi')^2 \ \,dx,
\end{equation}
and in the case when $\psi$ is also $\pi$-periodic  one has the stronger inequality
\begin{equation}
\label{1dimpoincare-even}\frac{1}{2\pi}\int_{-\pi}^{\pi} \psi^2 \ \,dx-\left(\frac{1}{2\pi}\int_{-\pi}^{\pi} \psi \ \,dx\right)^2\leq \frac{1}{8\pi}\int_{-\pi}^{\pi} (\psi')^2 \ \,dx.
\end{equation}
Note that $\pi$-periodicity of $\psi$ is equivalent to the property that $\psi$ is an even function on $S^1$ after identification of the circle $S^1$ with $[0, 2\pi)$ under the mapping $t \to e^{it}$.  
The standard proof of (\ref{1dimpoincare}) applies the Fourier series expansion:
$$
\psi(x) = \frac{a_0}{\sqrt{2\pi}} + \sum_{n=1}^{\infty} \frac{a_n \cos nx}{\sqrt{\pi}} + \frac{b_n \sin nx}{\sqrt{\pi}}.
$$
Observe that $\int_{-\pi}^{\pi} \psi(x) dx = \sqrt{2 \pi} a_0$, $\int_{-\pi}^{\pi} \psi^2 dx= a^2_0 + \sum_{n=1}^{\infty} a^2_n + b^2_n$, and  $\int_{-\pi}^{\pi} (\psi ')^2 dx= \sum_{n=1}^{\infty} n(a^2_n + b^2_n)$. This implies (\ref{1dimpoincare}). 
If, in addition, $\psi$ is $\pi$-periodic, then 
$$\int_0^{2\pi} \psi(x) \cos x dx = \int_0^{2\pi} \psi(x) \sin x dx =0,$$ 
and we get  (\ref{1dimpoincare-even}). See, e.g., Groemer \cite{Groemer}, Theorem 4.4.1 on page 149.

In general, given a log-concave probability measure $\mu$ with density $e^{-V}$ such that 
$$\nabla^2V\geq k_1 {\rm Id},\  \ \ \ k_1 >0,$$ 
 one has, for any $C^1$ function $\psi,$
\begin{equation}\label{poinc}
\int \psi^2 \ d \mu-\left(\int \psi \ d \mu \right)^2\leq \frac{1}{k_1}\int |\nabla \psi|^2 \ d \mu;
\end{equation}
this follows from the Brascamp-Lieb inequality \cite{BrLi}. Cordero-Erasquin, Fradelizi and Maurey \cite{CFM} proved a strengthening of (\ref{poinc}). This strengthening implies, in particular, that if $\psi$ and $V$ are additionally even, then
\begin{equation}\label{poinc-even}
\int \psi^2 \ d \mu-\left(\int \psi \ d \mu \right)^2\leq \frac{1}{2k_1}\int |\nabla \psi|^2 \ d \mu.
\end{equation}

In recent years, a number of conjectures have  appeared concerning the improvement of inequalities of Brunn-Minkowski type under additional symmetry assumptions. For instance, in the case of the Gaussian measure, Schechtman, Schlumprecht and Zinn \cite{SSZ} obtained an exciting inequality in the style of the conjecture of Dar \cite{dar}; Tehranchi \cite{tehr} has recently found an extension of their results, which is also a strengthening of the famous Gaussian correlation conjecture, recently proved by Royen \cite{royen} (see also Lata\l{}a, Matlak \cite{LatMat}).

One of the most famous of these conjectures is the Log-Brunn-Minkowski conjecture of B\"or\"oczky, Lutwak, Yang and Zhang (see \cite{BLYZ}, \cite{BLYZ-1}, \cite{BLYZ-2}). It states that for all symmetric convex bodies $K$ and $L$ with support functions $h_K$ and $h_L,$
\begin{equation}\label{Log-BM_eq}
|\lambda K+_0 (1-\lambda)L|\geq |K|^{\lambda}|L|^{1-\lambda},
\end{equation}
where $+_0$ stands for the \emph{geometric mean} 
\begin{equation}
\lambda K+_0 (1-\lambda)L=\{x\in \R^n\,:\, \langle x,u\rangle\leq h_K(u)^{\lambda} h_L(u)^{1-\lambda}\,\,\forall u\in \sfe\}.
\end{equation}
B\"or\"oczky, Lutwak, Yang and Zhang \cite{BLYZ} showed that the Log-Brunn-Minkowski conjecture holds for $n=2$. Saroglou \cite{christos} and Cordero-Erasquin, Fradelizi, Maurey \cite{CFM}  proved that (\ref{Log-BM_eq}) is true when $K$ and $L$ are unconditional (that is, they are symmetric with respect to every coordinate hyperplane). The conjecture was verified in a neighborhood of the Euclidean ball by Colesanti, Livshyts and Marsiglietti \cite{CLM}, \cite{CL}. In \cite{KolMilsupernew}, Kolesnikov and E. Milman found a relation between the Log-Brunn-Minkowski conjecture and the second eigenvalue problem for certain elliptic operators. In addition,  the ``local version'' of the Log-Brunn-Minkowski conjecture was verified in  \cite{KolMilsupernew}  for the cube and for $l_q$-balls, $q \ge 2$, when the dimension is sufficiently large. By ``local version'', we mean an inequality of isoperimetric or Poincar\'e type, obtained by differentiating the   inequality on an appropriate family of convex sets. Building on the results from \cite{KolMilsupernew}, Chen, Huang, Li, Liu \cite{global} managed to verify the $L_p$-Brunn-Minkowski inequality for symmetric sets, using techniques from PDE. Saroglou \cite{christos1} showed that the validity of (\ref{Log-BM_eq}) for all convex bodies is equivalent to the validity of the analogous statement for an arbitrary log-concave measure.

In \cite{LMNZ},  Livshyts, Marsiglietti, Nayar and Zvavitch proved that the Log-Brunn-Minkowski conjecture implies the conjecture of Gardner and Zvavitch. In fact, if (\ref{Log-BM_eq}) was proved to be true, then (\ref{dimbm}) would hold for any even log-concave measure $\mu$ and for all symmetric convex $K$ and $L.$ Therefore, (\ref{dimbm}) holds for all unconditional log-concave measures and unconditional convex sets, as well as for all even log-concave measures and symmetric convex sets in $\R^2$. %In fact, in Section 6, subsection 6.3 we strengthen that implication.

\medskip

The main result of this paper is the following.

\begin{theorem}\label{main}
Let $\mu$ be a symmetric log-concave measure on $\R^n$ with density $e^{-V(x)},$ for some convex function $V:\R^n\rightarrow \R.$ Suppose that $k_1, k_2>0$ constants such that
\begin{equation}\label{k1}
\nabla^2 V\geq k_1 {\rm Id},
\end{equation} 
where $\nabla^2 V$ stands for the Hessian of $V,$ and
\begin{equation}\label{k2}
\Delta V\leq k_2 n.
\end{equation} 
Let $R={k_2}/{k_1} \ge 1$. Then for symmetric convex sets $K$ and $L,$ and any $\lambda\in[0,1]$, one has
\begin{equation}\label{GZ-proved}
\mu(\lambda K+(1-\lambda)L)^{\frac{c}{n}}\geq \lambda \mu(K)^{\frac{c}{n}}+(1-\lambda)\mu(L)^{\frac{c}{n}},
\end{equation}
where 
$$c=c(R)=\frac{2}{(\sqrt{R}+1)^2}.$$
\end{theorem}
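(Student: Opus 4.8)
The plan is to reduce the multiplicative (dimensional) inequality \eqref{GZ-proved} to an infinitesimal, or ``local,'' statement, and then to establish that local statement using a Brascamp--Lieb / Bochner type argument that exploits the symmetry of $\mu$ via the even Poincar\'e inequality \eqref{poinc-even}. Concretely, I would first recall the standard equivalence: a measure $\mu$ is $p$-concave with respect to Minkowski addition of a given family of convex bodies if and only if, for every smooth convex body $K$ in that family and every ``perturbation'' by a support function (equivalently, for every $C^2_+$ function on $\sfe$), the second variation of $t\mapsto \mu(K_t)^p$ along the Minkowski-interpolated family $K_t$ is nonpositive at $t=0$. Writing $\mu(K_t)=\int_{K_t}e^{-V}$ and differentiating twice using the formula for the derivative of a volume-type functional under normal perturbations, this unwinds to an inequality of the form
\begin{equation*}
p\left(\int_{\partial K} h\, e^{-V}\, d\h\right)^2 \;\le\; \mu(K)\cdot \mathcal{Q}_K(h),
\end{equation*}
where $h$ ranges over support functions of symmetric bodies (so, roughly, over even functions on $\sfe$) and $\mathcal{Q}_K$ is the quadratic form coming from the second variation — it involves the boundary integral of the second fundamental form of $\partial K$, the term $\langle \nabla V, \nu\rangle h^2$, and a Dirichlet-type energy. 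So the first step is to set up this reduction carefully and identify $\mathcal{Q}_K$; this is by now fairly standard (following the Colesanti / Kolesnikov--Milman framework), but one must track the symmetry hypothesis through the reduction, noting that Minkowski combinations of symmetric bodies are symmetric, so it suffices to test against even $h$.

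The second and main step is to bound $\mathcal{Q}_K(h)$ from below. Here I would invoke the Reilly-type / Bochner identity on $\partial K$ together with an integration by parts that converts the boundary energy into a bulk integral over $K$ against the measure $\mu$; the convexity hypothesis \eqref{k1}, $\nabla^2 V\ge k_1\,\mathrm{Id}$, feeds in as a positive lower bound, while \eqref{k2}, $\Delta V\le k_2 n$, is what lets one control the ``trace'' term that appears and trade it against the dimension $n$. The crucial gain from symmetry is that, because $V$ and $h$ are even, one is allowed to apply the \emph{improved} even Poincar\'e inequality \eqref{poinc-even} of Cordero-Erasquin--Fradelizi--Maurey (with the factor $\tfrac{1}{2k_1}$ rather than $\tfrac{1}{k_1}$) to the relevant gradient term. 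Combining the Bochner inequality, the Poincar\'e bound, and the bounds \eqref{k1}, \eqref{k2}, one arrives at an estimate of the shape $\mathcal{Q}_K(h)\ge \tfrac{c}{n}\big(\int_{\partial K} h\,e^{-V}\big)^2/\mu(K)$ with $c=c(R)$; optimizing the interplay between the $k_1$-term (gained twice, via convexity and via the even Poincar\'e constant) and the $k_2 n$-term should produce exactly $c(R)=2/(\sqrt{R}+1)^2$, the value that makes the two contributions balance (note $c=1$ when $R=1$, consistent with the sharp case $V=|x|^2/2$).

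The third step is to pass from the local inequality back to the global one \eqref{GZ-proved}. This is the routine direction: given the differential inequality $\frac{d^2}{dt^2}\mu(K_t)^{c/n}\le 0$ along all Minkowski segments within the cone of symmetric convex bodies, concavity of $t\mapsto \mu(\lambda K+(1-\lambda)L)^{c/n}$ follows, first for smooth strictly convex bodies and then for general symmetric convex $K,L$ by approximation; the extension to convex bodies merely containing the origin would follow from the same argument once one checks the boundary terms retain the correct sign, or by a separate limiting/symmetrization argument. I expect the genuine obstacle to be Step~2: getting the constant to come out as $c(R)$ rather than something weaker requires applying the even Poincar\'e inequality to precisely the right auxiliary function and carefully not wasting the factor of $2$ — a naive application of Brascamp--Lieb would only give $c=1/R$ (hence the factor-of-$2$ loss in the Gaussian case, $R=1$, would be a factor of $1$, i.e.\ nothing), so extracting the symmetry improvement at the level of the quadratic form, and balancing it against the $\Delta V\le k_2 n$ term, is where the real work lies.
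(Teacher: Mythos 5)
Your overall skeleton --- reduce to a second-variation inequality along Minkowski segments, convert the boundary quadratic form to a bulk integral via a Reilly/Bochner identity, feed in a Poincar\'e-type input that exploits symmetry, and balance against $\Delta V\le k_2 n$ --- is exactly the architecture of the paper (Lemmas \ref{lemma1}--\ref{lemma3}). The genuine gap is in your Step 2, where the mechanism by which symmetry enters is misidentified. You propose to use the Cordero-Erausquin--Fradelizi--Maurey improvement \eqref{poinc-even} with constant $\tfrac{1}{2k_1}$. That inequality is never used in the proof, and it cannot be applied to the natural auxiliary functions here: after solving $Lu=1_K$ with the appropriate Neumann data, the functions one must control are the partial derivatives $u_{x_i}$, and since $u$ is \emph{even} these are \emph{odd}, so \eqref{poinc-even} does not apply to them. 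What symmetry actually buys is (i) $\int_K u_{x_i}\,d\mu=0$, so the ordinary Brascamp--Lieb inequality bounds $\int_K u_{x_i}^2\,d\mu$ itself (not merely its variance) by $\int_K\langle(\nabla^2V)^{-1}\nabla u_{x_i},\nabla u_{x_i}\rangle\,d\mu$, giving $\int_K\|\nabla^2u\|^2\,d\mu\ge k_1\int_K|\nabla u|^2\,d\mu$; and (ii) $\int_K\nabla V\,d\mu=0$, which via a Pr\'ekopa--Leindler (or Brascamp--Lieb) argument yields the key estimate $\int_K|\nabla V|^2\,d\mu\le\int_K\Delta V\,d\mu\le k_2n\,\mu(K)$. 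This last step, which your sketch does not contain, is indispensable: it is the only way the hypothesis \eqref{k2} enters, and without it the ``trace term'' you allude to cannot be controlled.

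A second, telling symptom that the balancing has not actually been carried out: you assert that $c=1$ when $R=1$. In fact $c(1)=2/(1+1)^2=\tfrac12$; the Gaussian case of the theorem is $\tfrac{1}{2n}$-concavity, not the conjectured $\tfrac1n$-concavity, and any mechanism producing $c(1)=1$ would prove the full Gardner--Zvavitch conjecture. Moreover, at $R=1$ the symmetry-derived gradient estimate contributes nothing (the optimal interpolation weight is $\varepsilon=0$ there), so the factor $\tfrac12$ in the Gaussian case comes entirely from the pointwise bound $\|\nabla^2u\|^2\ge\tfrac1n(\Delta u)^2$, the substitution $\Delta u=1+\langle\nabla V,\nabla u\rangle$, a completion of the square against the matrix $\tfrac1n\nabla V\otimes\nabla V+k_1\,{\rm Id}$, Jensen's inequality, and $\int_K|x|^2\,d\gamma\le n\gamma(K)$ --- not from any improved Poincar\'e constant. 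For general $R$ one interpolates between the two lower bounds for $\int_K\|\nabla^2u\|^2\,d\mu$ with weights $1-\varepsilon$ and $\varepsilon$ and optimizes, taking $\varepsilon=(R+1-2\sqrt R)/(R-1)$. Your Steps 1 and 3 are fine and match the paper.
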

%Note that when $\frac{k_1}{k_2}\rightarrow 0$, we have $c=(1+o(1))\frac{k_1}{2k_2}$.

Recall that the standard Gaussian measure $\gamma$ is the measure with the density $\left({1}/{\sqrt{2\pi}}\right)^ne^{-\frac{|x|^2}{2}}$. In this case, $\nabla V=x,$ $\nabla^2V={\rm Id}$, and hence $k_1=k_2=R=1$. Therefore, Theorem \ref{main} implies $1/2n-$concavity of the standard Gaussian measure. We shall prove a more general fact.

\begin{theorem}\label{corgauss}
Let $\gamma$ be the standard Gaussian measure. For  convex sets $K$ and $L$ in $\R^n$ which contain the origin, and any $\lambda\in[0,1]$, one has
\begin{equation}\label{OK}
\gamma(\lambda K+(1-\lambda)L)^{\frac{1}{2n}}\geq \lambda \gamma(K)^{\frac{1}{2n}}+(1-\lambda)\gamma(L)^{\frac{1}{2n}}.
\end{equation}
\end{theorem}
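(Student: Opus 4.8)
The plan is to run the infinitesimal strategy underlying Theorem~\ref{main}. Since convex bodies containing the origin form a class stable under Minkowski combinations, \eqref{OK} is equivalent to the concavity on $[0,1]$ of $t\mapsto\gamma\big((1-t)K+tL\big)^{1/2n}$ for every such pair $K,L$. Writing $\Omega_t=(1-t)K+tL$, with support function $h_t=(1-t)h_K+th_L$ and perturbation $\phi=h_L-h_K$, and parametrizing $\partial\Omega_t$ by the Gauss map, one differentiates twice: the second derivative of $\gamma(\Omega_t)$ is an integral over $\sfe$ against the weight $w_t=e^{-|x_t(\nu)|^2/2}\det Q_t$, where $x_t=\nabla h_t$ is the boundary point with outer normal $\nu$ and $Q_t=\nabla^2_{\sfe}h_t+h_t\,\mathrm{Id}$ is the reverse Weingarten operator, and the inequality $\gamma(\Omega_t)\,\gamma''\le\big(1-\tfrac1{2n}\big)(\gamma')^2$ needed for $1/2n$-concavity becomes a Brascamp--Lieb/Poincar\'e-type trace inequality on $\partial\Omega_t$ for the test function $\phi$. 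The only structural change relative to the symmetric case of Theorem~\ref{main} is that $0\in\Omega_t$ guarantees merely $h_t\ge0$, not evenness of $h_t$ or $\phi$.

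It is convenient to package this, and to make direct contact with Theorem~\ref{main}, via a lift to $\R^{2n}$: to $\Omega\ni0$ in $\R^n$ associate $\widehat\Omega:=\Omega\times(-\Omega)$. Then $0\in\widehat\Omega$, one checks $\widehat{(1-t)K+tL}=(1-t)\widehat K+t\widehat L$, and, since $\gamma_{2n}=\gamma_n\otimes\gamma_n$ and $\gamma_n$ is centrally symmetric, $\gamma_{2n}(\widehat\Omega)=\gamma_n(\Omega)^2$. Consequently \eqref{OK} is precisely $\tfrac1{4n}$-concavity of $\gamma_{2n}$ along the paths $t\mapsto\widehat{\Omega_t}$, which is exactly what Theorem~\ref{main} with $R=1$, $c=\tfrac12$ delivers for \emph{symmetric} bodies in $\R^{2n}$. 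The body $\widehat\Omega$ is not centrally symmetric, but it is invariant under the orthogonal involution $\sigma(x,y)=(-y,-x)$, which negates an $n$-dimensional subspace and fixes its complement, and $\sigma$ interchanges the two boundary pieces $\partial\Omega\times(-\Omega)$ and $\Omega\times\partial(-\Omega)$.

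Thus the task reduces to showing that the trace inequality behind Theorem~\ref{main} persists for bodies of the form $\Omega\times(-\Omega)$, which are only $\sigma$-symmetric rather than symmetric. I expect this for two complementary reasons. The factor $2$ gain originates from the Brascamp--Lieb inequality \cite{BrLi} and its strengthening of \cite{CFM}, and this step requires only that the Gaussian potential $V=|x|^2/2$ be even (with $\nabla^2V=\mathrm{Id}$), which is unaffected by dropping symmetry of the body; moreover the support function enters the boundary and curvature terms through expressions such as $-h_t\phi^2$ and $-\tfrac12\langle\nabla h_t,\nabla(\phi^2)\rangle$ that, after integration by parts on $\sfe$, recombine into a quantity with a favorable sign exactly when $h_t\ge0$ --- which is the role played by evenness of $\phi$ in the symmetric proof. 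The main obstacle is to carry out this substitution faithfully: to verify that $\sigma$-invariance together with $h_{\widehat\Omega}\ge0$ (equivalently, the pairing of the two boundary pieces) replaces full central symmetry term by term, with no loss in the constant, so that one still obtains $c(1)=\tfrac12$, i.e.\ inequality \eqref{OK}.
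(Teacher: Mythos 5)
There is a genuine gap. Your plan reduces \eqref{OK} to Theorem \ref{main} applied in $\R^{2n}$ to $\widehat\Omega=\Omega\times(-\Omega)$, but $\widehat\Omega$ is not origin-symmetric, so Theorem \ref{main} does not apply, and the substitute you propose ($\sigma$-invariance plus $h\ge 0$) is exactly the step you leave unverified (``I expect this\dots the main obstacle is to carry out this substitution faithfully''). That step is where the argument breaks: the symmetric proof uses central symmetry in two places, namely (i) evenness of the solution $u$ of $Lu=1_K$, which gives $\int_K u_{x_i}\,d\mu=0$ and hence allows the Brascamp--Lieb inequality in every direction, and (ii) $\int_K\nabla V\,d\mu=0$, which feeds Lemma \ref{k_2}. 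For $\widehat\Omega$, the involution $\sigma(x,y)=(-y,-x)$ fixes an $n$-dimensional subspace, so $\sigma$-invariance of $u$ only forces zero means of directional derivatives along the $(-1)$-eigenspace, and $\int_{\widehat\Omega}\nabla V\,d\gamma_{2n}=(m,-m)$ with $m=\int_\Omega x\,d\gamma_n$, which is nonzero in general. You also misattribute the constant: for the Gaussian the optimal choice in Lemma \ref{lemma2} is $\varepsilon=0$, so the Brascamp--Lieb/CFM improvement is not where the factor $2$ comes from at all; it comes from the term $|\nabla V|^2/(nk_1)$ in the denominator together with a bound on $\int_K|x|^2\,d\gamma$.

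The ingredient your proposal is missing is the paper's Lemma \ref{nowGauss}: for any star-shaped (in particular, any convex, origin-containing) body $K$,
\begin{equation*}
\int_K |x|^2\,d\gamma \le n\,\gamma(K),
\end{equation*}
which follows from $\frac{d}{ds}\gamma(sK)\big|_{s=1}\ge 0$ and the divergence theorem. Once you have this, no symmetry is needed anywhere: Lemma \ref{lemma1} and Lemma \ref{lemma2}(2) hold for an arbitrary Minkowski-closed family (the only symmetry-dependent step, \eqref{CFM+hess}, is skipped by taking $\varepsilon=0$), and then Jensen's inequality plus the displayed bound give $C_1\ge\frac12$, hence $C_0\ge\frac12$ and \eqref{OK}. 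Note also that your lift does not avoid this: unwinding the requirement $\int_{\widehat K}|\nabla V|^2\,d\gamma_{2n}\le 2n\,\gamma_{2n}(\widehat K)$ for $\widehat K=\Omega\times(-\Omega)$ returns precisely $\int_\Omega|x|^2\,d\gamma_n\le n\,\gamma_n(\Omega)$, so the product construction only repackages the inequality you still have to prove.
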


Interestingly, it was shown by Nayar and Tkocz \cite{NaTk} that only under the assumption of the sets containing the origin,  (\ref{GZ0}) fails in dimension two. Theorem \ref{corgauss} shows, however, that (\ref{OK}) does hold, even under this  assumption. See Remark \ref{TN} for more discussion.

%\begin{remark}
%In fact, the assumption of symmetry of $K$ and $L$ is used to claim that 
%$$\int \nabla U =0,$$
%for a function $U$ with the same set of symmetries as $K$, $L$ and $\mu.$
%\end{remark}

In order to derive all our results, we reduce the problem to its infinitesimal version following the approach of \cite{Col1}, \cite{CLM}, \cite{CL}, \cite{KM1}, \cite{KM2}, \cite{KolMil}, \cite{KolMilsupernew}. In particular, we use a Bochner-type identity obtained in \cite{KM1}.
The arguments are based on the application of the elliptic boundary value problem $Lu=F$ with Neumann boundary condition 
$u_{\nu}=f$. Our main result  corresponds to the simplest choice of $F$, namely $F=1$. However, we demonstrate that a choice 
of non-constant $F$ can lead to sharp estimates  (see Section 6). This is an important observation which we believe could be useful for further developments. In Section 6 we also prove that constant $c$ in (\ref{GZ-proved}) can be estimated by the parameter
$$
 \inf_{K}  \left[1-\frac{1}{n\mu(K)}\int_K \langle (\nabla^2 V+\frac{1}{n}\nabla V\otimes \nabla V)^{-1}\nabla V, \nabla V \rangle \,d\mu\right],
$$
where the infimum is taken over all symmetric convex sets.

This paper is organized as follows. In Section 2, we outline the high-level structure of the proof of Theorem \ref{main}, with the goal of indicating the main steps in the estimate. In Sections 3, 4 and 5 we proceed with the said steps, one at a time. At the end of Section 5 we include  the proof of 
Theorem \ref{corgauss}. In Section 6 we discuss some concluding remarks: namely, in subsection 6.1 we formulate a more general version of Theorem \ref{main} and in subsection 6.2 we discuss a more general approach to the proof which recovers the result of Gardner and Zvavitch about dilates of convex bodies.%, and in subsection 6.3 we outline a strengthening of the results from \cite{LMNZ}. 
\\
\\
\textbf{Acknowledgement.} First author supported by RFBR project 17-01-00662,  DFG project RO 1195/12-1 and the Simons Foundation.
Second author supported by NSF CAREER DMS-1753260. The article  was prepared within the framework of the HSE University Basic Research Program and funded by the Russian Academic Excellence Project '5-100'.  The work was partially supported by the National Science Foundation under Grant No. DMS-1440140 while the authors were in residence at the Mathematical Sciences Research Institute in Berkeley, California, during the Fall 2017 semester. The authors are grateful to Emanuel Milman for fruitful discussions. The authors are very grateful to Tomacz Tkocz for pointing out to them Lemma \ref{nowGauss}, leading to the formulation of Theorem \ref{corgauss}. The authors are grateful to the anonymous referee for the detailed and helpful report.

\section{High-level structure of the proof}

We shall work in $\R^n.$ Throughout, $K$ stands for a convex body (compact convex set with non-empty interior) and $\mu$ for a log-concave measure with density $e^{-V}$,
where $V$ is convex function. 
% It is assumed throughout (unless otherwise is stated) that $K$ is  \emph{origin-symmetric} and $V$ is even. 
The norm sign $||\cdot||$ with respect to a matrix stands for the Hilbert-Schmidt norm 
$$
\|A\|=\sqrt{\rm{Tr} (A A^{T})}.
$$ 
Given  vectors $a,b\in \R^n$ the corresponding tensor product $a\otimes b$ is a bilinear form defined  by
$$
a\otimes b (v, w) = \langle a, v \rangle \langle b , w \rangle.
$$
We shall assume without loss of generality that $V$ is twice continuously differentiable, the boundary of $K$ is $C^2$-smooth and $K$ is strictly convex; the general bounds follow by approximation.
The notation  $\nabla^2 u$ stands for the Hessian matrix of $u$.
%Unless specified otherwise, all the integrals below are with respect to the probability measure obtained as a restriction of $\mu$ onto $K$, that is $\frac{1}{\mu(K)}1_K(x)$

%\begin{definition}[Assumption (S): Stability under Minkowski convex interpolation]
%We say that a family $\mathcal{F}$ of convex sets in $\R^n$ satisfies Assumption (S) if for every pair of convex sets $K, L \in \mathcal{F}$, and every $\lambda \in [0,1]$, we have
%$$
%\lambda K + (1-\lambda) L \in \mathcal{F}.
%$$
%\end{definition}

Most of our results deal with the following two classes of sets which are closed under Minkowski convex combinations:
$$
\mathcal{F}_{sym} = \{ {\rm symmetric \ convex \ sets} \},
$$
$$
\mathcal{F}_{o} = \{ {\rm  convex \ sets \ containing \ the  \ origin} \}.
$$
%We define:

In this section, we outline the steps of the proof by gradually introducing several definitions and lemmas. Proofs of the lemmas will be given in subsequent sections.

\begin{definition}\label{c}
Fix the dimension $n\in\N.$ Consider a family $\mathcal{F}$ of convex sets in $\R^n$ which is closed under Minkowski convex combinations. The Gardner-Zvavitch constant $C_0= C_0(\mu, \mathcal{F})$ is the largest number so that for all convex sets $K, L \in \mathcal{F}$, and for any $\lambda\in[0,1]$,
\begin{equation}\label{GZdef}
\mu(\lambda K+(1-\lambda)L)^{\frac{C_0}{n}}\geq \lambda \mu(K)^{\frac{C_0}{n}}+(1-\lambda)\mu(L)^{\frac{C_0}{n}}.
\end{equation}
\end{definition}

It can be verified, by considering small balls centered at the origin, that   
$$C_0(\mu, \mathcal{F}_{sym}) \leq 1$$ 
for every log-concave measure $\mu$ which is not supported on a proper subspace of $\R^n$. By H\"older's inequality, (\ref{GZdef}) implies (\ref{GZ0}) for all $c\in [0, C_0]$. Therefore, we shall be concerned with estimating $C_0$ from below.

%To outline the steps of the proof, we introduce a few definitions and lemmas. 

We consider the weighted Laplace operator $L$ associated with the measure $\mu$, that is 
\begin{equation}\label{L}
Lu=\Delta u-\langle \nabla u, \nabla V\rangle.
\end{equation} 
In the case when $\mu$ is Gaussian, this operator is commonly referred to as the Ornstein-Uhlenbeck operator. We shall make use of the generalized integration by parts identity: for any $u,v\in C^2(\R^n)$,
$$\int_{\R^n} v\cdot Lu \,d\mu=-\int_{\R^n} \langle \nabla v, \nabla u\rangle \,d\mu.$$

%\textbf{In what follows without loss of generality we assume that $$\mu(K)=1.$$}

\begin{definition}\label{C_1}
Define $C_1=C_1(\mu,\mathcal{F})$ to be the largest number, such that for every $u\in C^2(K)$ and $K \in \mathcal{F}$ with $Lu=1_K,$ 
$$\frac{1}{\mu(K)}\int_K ||\nabla^2 u||^2+\langle \nabla^2V \nabla u, \nabla u\rangle \,d\mu\geq \frac{C_1(\mu,\mathcal{F})}{n}.$$
\end{definition}

The first key step in our proof is outlined in the following lemma:

\begin{lemma}\label{lemma1}
For every family $\mathcal{F}$ of convex sets in $\R^n$ closed under Minkowski convex combinations,
$$
C_0(\mu,\mathcal{F})\geq C_1(\mu, \mathcal{F}).
$$
\end{lemma}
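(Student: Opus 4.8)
The plan is to reduce the Gardner--Zvavitch-type inequality (\ref{GZdef}) to an infinitesimal statement along a path of Minkowski convex combinations, following the by-now standard variational approach of \cite{CLM}, \cite{CL}, \cite{KolMilsupernew}. Fix $K, L \in \mathcal{F}$ and, after the usual smooth/strictly convex approximation, consider the Minkowski interpolation $K_t = (1-t)K + tL$ for $t \in [0,1]$, whose support function is $h_t = (1-t)h_K + t h_L$, so that $\dot h_t = h_L - h_K =: f$ on $\sfe$. We want to show that $t \mapsto \mu(K_t)^{C_1/n}$ is concave, equivalently that $G(t) := \mu(K_t)^{C_1/n}$ satisfies $\ddot G \le 0$; since $G$ is built from $\mu(K_t)$, this is equivalent to a differential inequality relating $\frac{d}{dt}\mu(K_t)$ and $\frac{d^2}{dt^2}\mu(K_t)$, namely
$$
\mu(K_t)\,\frac{d^2}{dt^2}\mu(K_t) \le \Bigl(1 - \frac{n}{C_1}\Bigr)\Bigl(\frac{d}{dt}\mu(K_t)\Bigr)^2.
$$
So the first step is to compute these two derivatives. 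The first variation is the classical formula $\frac{d}{dt}\mu(K_t) = \int_{\partial K_t} f(\nu_x)\, e^{-V}\, d\mathcal{H}^{n-1}$, where $\nu_x$ is the outer normal; the second variation brings in the curvature of $\partial K_t$ and the normal derivative of $V$, and this is the computation that requires care.

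The key device for handling the second variation is the elliptic Neumann problem: on $K = K_t$, solve $Lu = 1_K$ (i.e. $\Delta u - \langle \nabla u, \nabla V\rangle = 1$) with Neumann data $u_\nu = f(\nu)$ on $\partial K$ prescribed by the interpolation velocity. Integrating $Lu = 1$ against $\mu$ and using the generalized integration by parts identity shows this is solvable precisely because $\int_{\partial K} f\, d\mu_{\partial K} = \mu(K)$ matches $\frac{d}{dt}\mu(K_t)$ at the relevant normalization; one rescales $f$ so the compatibility condition holds. The point of introducing $u$ is that the awkward boundary term in the second variation — which involves the second fundamental form of $\partial K$ and is not obviously signed — can be rewritten, via a Bochner-type (Reilly-type) identity on $(K,\mu)$ of the kind established in \cite{KM1}, as an integral over $K$ of $\|\nabla^2 u\|^2 + \langle \nabla^2 V\, \nabla u, \nabla u\rangle$ plus a boundary term controlled by convexity of $K$ (the second fundamental form is positive semidefinite). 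Concretely, the Bochner/Reilly identity gives
$$
\int_K (Lu)^2\, d\mu = \int_K \|\nabla^2 u\|^2\, d\mu + \int_K \langle \nabla^2 V\, \nabla u, \nabla u\rangle\, d\mu + \int_{\partial K} \bigl(\mathrm{II}(\nabla_\tau u, \nabla_\tau u) + H_\mu\, u_\nu^2 \bigr)\, d\mu_{\partial K},
$$
or a variant thereof, and since $Lu = 1$ the left side is just $\mu(K)$. Combining this with the first- and second-variation formulas, and discarding the nonnegative $\mathrm{II}$-term by convexity, yields an inequality of exactly the shape needed, with the "deficit" controlled by $\int_K \|\nabla^2 u\|^2 + \langle \nabla^2 V \nabla u, \nabla u\rangle\, d\mu$. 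By the definition of $C_1$, this quantity is at least $\frac{C_1}{n}\mu(K)$, which is precisely what is required to close the inequality $\ddot G \le 0$ and hence conclude $C_0 \ge C_1$.

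The main obstacle I anticipate is the bookkeeping in the second-variation computation: correctly tracking how the boundary integral over the moving hypersurface $\partial K_t$ decomposes, matching the normal derivative $u_\nu$ with the interpolation velocity $f$, and verifying that the Bochner/Reilly identity applies with the right sign conventions so that the convexity term can indeed be thrown away rather than obstructing the estimate. A secondary technical point is the approximation argument reducing the general case to smooth strictly convex bodies and $C^2$ potentials, and ensuring the solution $u$ of the Neumann problem has enough regularity up to the boundary for the integration by parts and the Reilly identity to be legitimate; these are handled by standard elliptic regularity together with the density statements already invoked in the paper. Once the variational inequality is in hand, the passage from "$t \mapsto \mu(K_t)^{C_1/n}$ is concave on $[0,1]$" to the two-body statement (\ref{GZdef}) with general $\lambda$ is immediate by evaluating at the endpoints $t=0,1$ and $t = 1-\lambda$.
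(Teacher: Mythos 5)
Your overall strategy is the paper's: reduce to concavity of $t\mapsto\mu(K_t)^{C_1/n}$ along Minkowski interpolation, compute the first and second variations, introduce the Neumann problem $Lu=1$, $u_\nu=f$, and invoke the Reilly/Bochner identity of \cite{KM1} together with the definition of $C_1$. But there is a genuine gap at the one step that is not bookkeeping. The Reilly identity is an \emph{identity}, and its boundary term is
$$
\int_{\partial K}\Bigl(H_x\,u_\nu^2-2\langle\nabla_{\partial K}u,\nabla_{\partial K}u_\nu\rangle+\langle \mathrm{II}\,\nabla_{\partial K}u,\nabla_{\partial K}u\rangle\Bigr)\,d\mu_{\partial K};
$$
your version omits the cross term $-2\langle\nabla_{\partial K}u,\nabla_{\partial K}u_\nu\rangle$, and your plan to ``discard the nonnegative $\mathrm{II}$-term by convexity'' does not close the argument. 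The second-variation formula you must match contains $-\langle \mathrm{II}^{-1}\nabla_{\partial K}f,\nabla_{\partial K}f\rangle$, whereas Reilly produces $+\langle \mathrm{II}\,\nabla_{\partial K}u,\nabla_{\partial K}u\rangle-2\langle\nabla_{\partial K}u,\nabla_{\partial K}f\rangle$ with an \emph{unsigned} cross term coupling the unknown $u$ to the data $f$; throwing away the positive $\mathrm{II}$-term leaves that cross term uncontrolled. The correct move is the matrix Cauchy--Schwarz/AM--GM inequality $\langle Ax,x\rangle+\langle A^{-1}y,y\rangle\ge 2\langle x,y\rangle$ for positive-definite $A$, applied with $A=\mathrm{II}$, $x=\nabla_{\partial K}u$, $y=\nabla_{\partial K}f$: this absorbs the cross term and the $\mathrm{II}$-term precisely into the $-\langle \mathrm{II}^{-1}\nabla_{\partial K}f,\nabla_{\partial K}f\rangle$ appearing in the second variation. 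Without this (or an equivalent substitute) the proof does not go through.

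Two smaller points. First, your normalization ``rescale $f$ so that $\int_{\partial K}f\,d\mu_{\partial K}=\mu(K)$'' is impossible when $\int_{\partial K}f\,d\mu_{\partial K}=0$; that case must be treated separately, and there the desired inequality is exactly the infinitesimal form of log-concavity of $\mu$ (Theorem 1.1 in \cite{KM2}), valid for any constant. Second, the differential inequality you state, $\mu\,\ddot\mu\le(1-\tfrac{n}{C_1})\dot\mu^2$, has the ratio inverted: concavity of $\mu(K_t)^{c/n}$ is equivalent to $\mu\,\ddot\mu\le\frac{n-c}{n}\dot\mu^2$.
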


Next, we conclude with two more lemmas.

\begin{lemma}\label{lemma2}
Assume that   $\nabla^2 V\geq k_1 {\rm Id}$.

\begin{enumerate}
\item
Assume, in addition, that $V$ is even.
 Then for every $\varepsilon\in[0,1]$,
$$C_1(\mu,\mathcal{F}_{sym})\geq \frac{1}{\mu(K)}\int_K \frac{1}{\frac{|\nabla V|^2}{(1+\varepsilon)nk_1}+\frac{1}{1-\varepsilon}} \,d\mu.$$

\item
For every family $\mathcal{F}$ of convex sets which is closed under Minkowski convex combinations, one has
$$C_1(\mu,\mathcal{F})\geq \frac{1}{\mu(K)}\int_K \frac{1}{\frac{|\nabla V|^2}{nk_1}+1} \,d\mu.$$
\end{enumerate}

\end{lemma}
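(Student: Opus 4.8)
The plan is to work directly from Definition \ref{C_1}. Fix a symmetric convex body $K$ and a function $u \in C^2(K)$ solving the Neumann boundary value problem $Lu = \1_K$ in $K$ with $u_\nu = 0$ on $\partial K$ (such $u$ exists and is even when $V$ is even, since then the equation and domain are invariant under $x \mapsto -x$; averaging $u(x)$ and $u(-x)$ preserves the equation). The quantity to bound from below is
$$
\frac{1}{\mu(K)}\int_K \bigl( \|\nabla^2 u\|^2 + \langle \nabla^2 V \nabla u, \nabla u\rangle \bigr)\,d\mu .
$$
The first step is to invoke the Bochner-type identity referenced in the introduction (from \cite{KM1}): integrating the Bochner formula for $L$ over $K$ against $\mu$, with the Neumann condition killing (or controlling the sign of, by convexity of $K$) the boundary term, one gets
$$
\int_K (Lu)^2 \, d\mu = \int_K \|\nabla^2 u\|^2 \,d\mu + \int_K \langle \nabla^2 V \nabla u, \nabla u\rangle \,d\mu + (\text{boundary term} \ge 0).
$$
Since $Lu = \1_K$, the left side is exactly $\mu(K)$, so the Hessian integral in Definition \ref{C_1} is at least $\mu(K) - (\text{something}) $; more precisely the identity rearranges so that the full expression we want equals $1 - \frac{1}{\mu(K)}(\text{boundary contribution involving } u_\nu)$, which under $u_\nu = 0$ gives a clean handle. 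I would then bring in the lower bound $\nabla^2 V \ge k_1 \mathrm{Id}$ together with the Brascamp–Lieb / Poincaré inequality on $K$ (in the even case, the strengthened version (\ref{poinc-even}) with constant $\frac{1}{2k_1}$) applied to the components of $\nabla u$, to convert $\int_K \|\nabla^2 u\|^2 d\mu$ and $\int_K \langle \nabla^2 V \nabla u, \nabla u \rangle d\mu$ into control on $\int_K |\nabla u|^2 d\mu$ and on $\int_K \langle \nabla u, \nabla V\rangle d\mu$. The identity $\int_K \langle \nabla u, \nabla V \rangle d\mu = \int_K \Delta u \, d\mu - \int_K Lu\, d\mu = \int_K \Delta u\,d\mu - \mu(K)$ and the relation $\int_K \Delta u\, d\mu = \int_{\partial K} u_\nu\, d\mathcal{H}^{n-1} + \int_K \langle \nabla u, \nabla V\rangle d\mu$ tie these together.

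The core of the argument is then an optimization: after the above reductions one is left with a pointwise/averaged inequality in which the free parameter $\varepsilon \in [0,1]$ arises from splitting the cross term $\langle \nabla u, \nabla V\rangle$ via Young's inequality, $2\langle \nabla u, \nabla V\rangle \le \frac{(1+\varepsilon) n k_1}{|\nabla V|^2}|\nabla u|^2 \cdot (\cdots) + \cdots$ — choosing the weight so that the $\|\nabla^2 u\|^2$ term (controlled below by $\frac{(\Delta u)^2}{n}$ via Cauchy–Schwarz on the trace, and $\Delta u = \1_K + \langle \nabla u, \nabla V\rangle$) can absorb it. Tracking constants, the worst case is governed by the function $x \mapsto \frac{1}{\frac{x}{(1+\varepsilon) n k_1} + \frac{1}{1-\varepsilon}}$ evaluated at $x = |\nabla V|^2$, and integrating this lower bound over $K$ against $\mu/\mu(K)$ yields part (1). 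Part (2) is the same computation without the symmetry gain: use the standard Brascamp–Lieb constant $\frac{1}{k_1}$ instead of $\frac{1}{2k_1}$, which corresponds to setting $\varepsilon = 0$ in the even estimate (equivalently, one does not get to split off the favorable factor), producing $\frac{1}{\frac{|\nabla V|^2}{n k_1} + 1}$.

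The main obstacle I anticipate is the careful bookkeeping in the Bochner identity and the boundary terms: one must verify that the Neumann condition $u_\nu = 0$ really does make the boundary contribution in Bochner's formula non-negative (this is where convexity of $K$ enters, through the second fundamental form of $\partial K$ being positive semidefinite), and that the even symmetry of $u$ is genuinely needed and usable to upgrade the Poincaré constant from $\frac{1}{k_1}$ to $\frac{1}{2k_1}$ — i.e., that $\nabla u$, or the relevant scalar test functions built from it, are odd/even in the way required to apply (\ref{poinc-even}) rather than only (\ref{poinc}). The second delicate point is making the parameter $\varepsilon$ enter exactly as stated; this requires choosing the Young's-inequality weights as functions of $|\nabla V|$ (not constants) and checking that the resulting pointwise bound integrates correctly, with no loss, to the claimed average. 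Everything else — the integration-by-parts identities for $\int \langle \nabla u, \nabla V\rangle\, d\mu$ and $\int \Delta u\, d\mu$, and the trace Cauchy–Schwarz $\|\nabla^2 u\|^2 \ge (\Delta u)^2/n$ — is routine.
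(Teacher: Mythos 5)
There are several genuine problems here, and the most serious is directional: the Bochner identity of Proposition \ref{raileyprop}, with the boundary term nonnegative by convexity of $K$, reads $\mu(K)=\int_K(Lu)^2\,d\mu\ \ge\ \int_K\bigl(\|\nabla^2u\|^2+\langle\nabla^2V\nabla u,\nabla u\rangle\bigr)d\mu$, i.e.\ it gives an \emph{upper} bound on the quantity you must bound from \emph{below}, so it cannot be the engine of this lemma (in the paper it is used only in Lemma \ref{lemma1}, in the opposite direction). Relatedly, you restrict to the solution with $u_\nu=0$; but $C_1$ is defined as the largest constant valid for \emph{every} $u$ with $Lu=\1_K$, and in the application the relevant $u$ has $u_\nu=f\neq 0$. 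A correct proof must use no boundary information at all, and indeed the paper's does: it combines two pointwise-in-$K$ lower bounds on $\int\|\nabla^2u\|^2d\mu$, namely $\int\|\nabla^2u\|^2d\mu\ge\frac1n\int(\Delta u)^2d\mu$ (trace Cauchy--Schwarz, which you have) and $\int\|\nabla^2u\|^2d\mu\ge k_1\int|\nabla u|^2d\mu$ (Brascamp--Lieb applied to each $u_{x_i}$), with weights $1-\varepsilon$ and $\varepsilon$ --- this convex combination, not a Young's-inequality split of the cross term, is where $\varepsilon$ enters. After substituting $\Delta u=\1_K+\langle\nabla V,\nabla u\rangle$ one gets a quadratic form in $\nabla u$ with matrix $A_\varepsilon=\frac{1-\varepsilon}{n}\nabla V\otimes\nabla V+k_1(1+\varepsilon)\mathrm{Id}$, which is minimized exactly via $\langle Ax,x\rangle+\langle A^{-1}y,y\rangle\ge2\langle x,y\rangle$ and the explicit inverse of a rank-one perturbation of the identity; that computation is what produces the precise integrand $\bigl(\frac{|\nabla V|^2}{(1+\varepsilon)nk_1}+\frac{1}{1-\varepsilon}\bigr)^{-1}$, and your sketch never reaches it.

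You have also misidentified where symmetry enters. The improved constant $\frac{1}{2k_1}$ of (\ref{poinc-even}) is irrelevant: it applies to \emph{even} test functions, whereas the functions $u_{x_i}$ are \emph{odd} (since $u$ is even). What symmetry actually buys is $\int_K u_{x_i}\,d\mu=0$, which is exactly what lets one apply the plain Brascamp--Lieb variance inequality $\int_K u_{x_i}^2\,d\mu\le\int_K\langle(\nabla^2V)^{-1}\nabla u_{x_i},\nabla u_{x_i}\rangle\,d\mu$ and hence obtain the second lower bound $\int\|\nabla^2u\|^2d\mu\ge k_1\int|\nabla u|^2d\mu$. Part (2) differs from part (1) not by a worse Poincar\'e constant but by the unavailability of this step altogether (no mean-zero property), which forces $\varepsilon=0$; the term $k_1\int|\nabla u|^2d\mu$ is then supplied solely by $\langle\nabla^2V\nabla u,\nabla u\rangle\ge k_1|\nabla u|^2$. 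As written, the proposal would not assemble into a proof of either part.
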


\begin{lemma}\label{lemma3}
Fix a convex function $V$ on $\R^n$. Assume that $\Delta V\leq k_2 n$. Fix a constant $k_1>0$ and let $R={k_2}/{k_1}$.
\begin{enumerate}
\item If a convex set $K$ and the measure $\mu$ with density $e^{-V}$ satisfy $\int_{K} \nabla V d \mu=0$, then there exists an $0 < \varepsilon <1$ such that
$$\frac{1}{\mu(K)}\int_K \frac{1}{\frac{|\nabla V|^2}{(1+\varepsilon)nk_1}+\frac{1}{1-\varepsilon}} \,d\mu\geq \frac{2}{(\sqrt{R}+1)^2}.$$
\item For the standard Gaussian measure $\gamma$ and for every convex set $K$ which contains the origin, we have
$$\frac{1}{\gamma(K)}\int_K \frac{1}{\frac{|x|^2}{n}+1} \,d\gamma \geq \frac{1}{2}.$$
\end{enumerate}
\end{lemma}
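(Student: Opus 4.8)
The plan is to derive both parts from the same two-step template: bound the integrand below by Jensen's inequality for the convex function $t\mapsto(at+b)^{-1}$ ($a,b>0$), reducing each claim to an upper bound on the $K$-average of $|\nabla V|^2$ (resp. $|x|^2$), and then establish that averaged bound.

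\emph{Part (1).} Since $s\mapsto\bigl(\tfrac{s}{(1+\varepsilon)nk_1}+\tfrac{1}{1-\varepsilon}\bigr)^{-1}$ is convex on $[0,\infty)$, Jensen's inequality for the probability measure $\mu(K)^{-1}\mu|_K$, applied with $s=|\nabla V|^2$, gives, for every fixed $\varepsilon\in(0,1)$,
$$\frac{1}{\mu(K)}\int_K \frac{d\mu}{\frac{|\nabla V|^2}{(1+\varepsilon)nk_1}+\frac{1}{1-\varepsilon}}\ \geq\ \frac{1}{\frac{A}{(1+\varepsilon)nk_1}+\frac{1}{1-\varepsilon}},\qquad A:=\frac{1}{\mu(K)}\int_K |\nabla V|^2\,d\mu.$$
It then suffices to show $A\le k_2 n$. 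For this I would invoke the Brascamp--Lieb inequality for the log-concave probability measure $\mu(K)^{-1}\mu|_K$ (legitimate because $e^{-V}\1_K$ is log-concave and $\nabla^2 V\ge k_1{\rm Id}>0$: approximate $\1_K$ by smooth strictly log-concave weights $e^{-\phi_m}$ and use $(\nabla^2 V+\nabla^2\phi_m)^{-1}\le(\nabla^2 V)^{-1}$ to pass to the limit), applied to each coordinate function $\partial_i V$. Since $\nabla(\partial_i V)$ is the $i$-th column of $\nabla^2 V$, one gets $\langle(\nabla^2 V)^{-1}\nabla(\partial_i V),\nabla(\partial_i V)\rangle=\partial_{ii}V$, so summing over $i$,
$$\sum_{i=1}^n {\rm Var}_{\mu|_K}(\partial_i V)\ \leq\ \frac{1}{\mu(K)}\int_K \Delta V\,d\mu\ \leq\ k_2 n .$$
The hypothesis $\int_K\nabla V\,d\mu=0$ makes the mean of $\nabla V$ vanish, so the left-hand side equals exactly $A$; hence $A\le k_2 n=Rk_1 n$. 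Substituting into the Jensen bound gives the lower bound $\bigl(\tfrac{R}{1+\varepsilon}+\tfrac{1}{1-\varepsilon}\bigr)^{-1}$, and a one-line optimization (setting the derivative of $\tfrac{R}{1+\varepsilon}+\tfrac{1}{1-\varepsilon}$ to zero) shows the choice $\varepsilon=\tfrac{\sqrt R-1}{\sqrt R+1}\in[0,1)$ makes it equal to $\tfrac{2}{(\sqrt R+1)^2}$. For $R>1$ this $\varepsilon$ is strictly positive, as required. When $R=1$ the constraints force $\nabla^2 V\equiv k_1{\rm Id}$, so $\nabla V(x)=k_1(x-b)$ with $b\in{\rm int}\,K$ the $\mu|_K$-barycenter, and the divergence theorem gives $A=k_1 n-\tfrac{k_1}{\mu(K)}\int_{\partial K}\langle x-b,\nu\rangle\,d\mu_{\partial K}<k_1 n$ strictly, which leaves room to take a small positive $\varepsilon$.

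\emph{Part (2).} The same Jensen step (now $a=1/n$, $b=1$) reduces the claim to $\frac{1}{\gamma(K)}\int_K |x|^2\,d\gamma\le n$. Writing $p$ for the Gaussian density, ${\rm div}(x\,p(x))=(n-|x|^2)p(x)$, so the divergence theorem yields
$$\int_K(n-|x|^2)\,d\gamma=\int_{\partial K}\langle x,\nu\rangle\,d\gamma_{\partial K}\ \geq\ 0,$$
the inequality holding because $0\in K$ forces $\langle x,\nu(x)\rangle\ge 0$ at each $x\in\partial K$ (the supporting hyperplane at $x$ keeps the origin on the inner side). Hence $\int_K|x|^2\,d\gamma\le n\gamma(K)$, and the Jensen bound becomes $\bigl(\tfrac1n\cdot n+1\bigr)^{-1}=\tfrac12$.

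The main obstacle is the estimate $A\le k_2 n$ in Part (1). This is precisely where the symmetry-type hypothesis $\int_K\nabla V\,d\mu=0$ is indispensable — without it the Brascamp--Lieb step leaves the uncontrolled term $-|\int_K\nabla V\,d\mu|^2/\mu(K)$ — and it requires the Brascamp--Lieb inequality in its form for log-concave measures restricted to convex bodies, plus the minor bookkeeping at $R=1$. The rest (the two Jensen applications, the divergence-theorem computation, and the scalar optimization in $\varepsilon$) is routine.
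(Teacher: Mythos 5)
Your proof is correct and follows essentially the same route as the paper: Jensen's inequality reduces each part to the second-moment bounds $\int_K|\nabla V|^2\,d\mu\le\int_K\Delta V\,d\mu$ and $\int_K|x|^2\,d\gamma\le n\gamma(K)$, which the paper establishes in its Lemmas 5.1 and 5.2, with the same optimal $\varepsilon=\frac{\sqrt R-1}{\sqrt R+1}$. Your Brascamp--Lieb derivation of the first moment bound is exactly the alternative the paper records in the remark after Lemma 5.1 (the paper's primary argument uses log-concavity of $t\mapsto\int_Ke^{-V(x+te_i)}\,dx$ via Pr\'ekopa--Leindler, which avoids needing $\nabla^2V>0$), and your care with the $R=1$ edge case is a minor refinement the paper glosses over.
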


\textbf{Proof of Theorem \ref{main}.} The theorem follows immediately from Lemma \ref{lemma1} applied to $\mathcal{F}=\mathcal{F}_{sym}$, Lemma \ref{lemma2} (1) and Lemma \ref{lemma3} (1), in view of the Definition \ref{c}. $\square$ 

\medskip

\textbf{Proof of Theorem \ref{corgauss}.} Since $\nabla V=x$ and $k_1=1,$ the theorem follows immediately from Lemma \ref{lemma1}, Lemma \ref{lemma2} (2) and Lemma \ref{lemma3} (2), in view of the Definition \ref{c}.
$\square$

%In the following sections we shall prove each of the lemmas separately.

\section{Proof of Lemma \ref{lemma1}}

The proof of Lemma \ref{lemma1} is a combination of a variational argument, integration by parts, and an application of the Cauchy-Schwarz inequality. We start by introducing the variational argument.

\subsection{Variational argument}
Infinitesimal versions of Brunn-Minkowski type inequalities have been considered and extensively studied in \cite{BakLed}, \cite{BL1}, \cite{BL2}, \cite{Col1}, \cite{Col2}, \cite{Colesanti-Hug-Saorin2}, \cite{KolMil-1}, \cite{KolMil}, \cite{KolMilsupernew}, \cite{CLM}, \cite{CL}.

Following Schneider (\cite{book4}, page 115)  we say that a convex body $K$ is of class $C^2$ if its support function is of class $C^2.$ Further, we say that $K$ is of class $C^{2}_{+}$ if $K$ is of class $C^2$ and admits positive Gauss curvature. We say that a function $h \colon \sfe \to \mathbb{R}$ is a $C^{2}_{+}(\sfe)$-function if it is a support function of a $C^{2}_{+}$ convex body.

%In what follows we consider a family $\mathcal{F}$ of convex sets satisfying assumption (S), i.e. $\mathcal{F}$ is stable under Minkowski convex interpolation.

Let $h$ be the support function of a $C^{2}_{+}$ convex body $K$ and let $\psi\in C^2(\sfe)$. Then
\begin{equation}\label{additive perturbation}
h_s=h+s\psi\in C^{2}_{+}(\sfe),
\end{equation}
if $s$ is sufficiently small (say $|s|\leq a$ for some appropriate $a>0$). Hence for every $s$ in this range there 
exists a unique $C^{2}_{+}$ convex body $K_s$ with support function $h_s$. 
For an interval $I$, we define the one-parameter family of convex bodies
$$
\mathcal{K}(h,\psi,I)=\{K_s\,:\, h_{K_s}=h+s\psi,\, s\in I\}.
$$

\begin{lemma}\label{key_lemma_BM}
Assume that $\mu$ is a  log-concave measure with twice continuously differentiable density, $c$ is a positive constant, and $\mathcal{F}$ is a family of convex sets closed under Minkowski convex combinations. 
The inequality 
\begin{equation}\label{GZc}
\mu(\lambda K+(1-\lambda)L)^{\frac{c}{n}}\geq \lambda \mu(K)^{\frac{c}{n}}+(1-\lambda)\mu(L)^{\frac{c}{n}}
\end{equation} 
 holds for all $K, L \in \mathcal{F}$ and every $\lambda\in [0,1]$, if and only if for every one-parameter family $\mathcal{K}(h,\psi,I)$ such that $K_s \in \mathcal{F}$ for every $s\in I$, one has
\begin{equation}\label{dimbmeq}
\left. \frac{d^2}{ds^2} \mu(K_s) \right|_{s=0}\cdot \mu(K_0)\le \frac{n-c}{n}\left(\left. \frac{d}{ds} \mu(K_s)\right|_{s=0}\right)^2.
\end{equation}
\end{lemma}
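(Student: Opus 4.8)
The whole argument rests on one structural observation: along any subinterval, the assignment $s\mapsto h+s\psi$ of support functions corresponds to forming Minkowski convex combinations of the bodies $K_s$, since support functions are additive and positively homogeneous. Consequently (\ref{GZc}) is literally the statement that the scalar function $G(s):=\mu(K_s)^{c/n}$ is concave, and (\ref{dimbmeq}) is its pointwise second‑order reformulation. So the plan is to make this dictionary precise in both directions, using nothing beyond the elementary fact that a $C^2$ function is concave iff its second derivative is nonpositive.

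For the ``only if'' direction I would argue as follows. Let $\mathcal{K}(h,\psi,I)$ be a one‑parameter family with $K_s\in\mathcal{F}$ for all $s\in I$. For $s_0,s_1\in I$ and $\lambda\in[0,1]$ we have $h+(\lambda s_0+(1-\lambda)s_1)\psi=\lambda h_{K_{s_0}}+(1-\lambda)h_{K_{s_1}}=h_{\lambda K_{s_0}+(1-\lambda)K_{s_1}}$, hence $K_{\lambda s_0+(1-\lambda)s_1}=\lambda K_{s_0}+(1-\lambda)K_{s_1}$; applying (\ref{GZc}) to $K_{s_0},K_{s_1}\in\mathcal{F}$ shows that $G$ is concave on $I$. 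I would then invoke the (standard, for $C^2_+$ perturbations) fact that $m(s):=\mu(K_s)$ is twice continuously differentiable and strictly positive — this is where the smoothness of $e^{-V}$ and of $s\mapsto\partial K_s$ enters, and where the variation formulas used later in the paper apply; see e.g. \cite{Col1}, \cite{CLM}, \cite{KolMilsupernew}. Since $G=m^{c/n}$ gives $G''=\frac{c}{n}m^{c/n-2}\big[(\tfrac{c}{n}-1)(m')^2+mm''\big]$, dividing the inequality $G''(0)\le0$ by the positive factor $\frac{c}{n}m(0)^{c/n-2}$ yields exactly (\ref{dimbmeq}).

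For the converse, the same identity shows that (\ref{dimbmeq}) at $s=0$ is equivalent to $G''(0)\le0$. Reparametrizing an arbitrary admissible family around an interior point $s_0$ — legitimate because $h_{K_{s_0}}+t\psi=h+(s_0+t)\psi$ and $K_{s_0}$ is again of class $C^2_+$, so $\mathcal{K}(h_{K_{s_0}},\psi,I-s_0)$ is admissible — I would deduce $G''\le0$ throughout $I$, i.e.\ $G$ is concave on $I$. Now, given $C^2_+$ bodies $K,L\in\mathcal{F}$, take $h=h_K$, $\psi=h_L-h_K$ and $I=[0,1]$; since $(1-s)K+sL$ is again of class $C^2_+$ for $s\in[0,1]$ (Schneider \cite{book4}) and lies in $\mathcal{F}$, concavity of $G$ on $[0,1]$ evaluated at $s=1-\lambda$ reads $\mu(\lambda K+(1-\lambda)L)^{c/n}\ge\lambda\mu(K)^{c/n}+(1-\lambda)\mu(L)^{c/n}$, which is (\ref{GZc}) for $C^2_+$ bodies. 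The general case follows by approximating arbitrary members of $\mathcal{F}_{sym}$ (resp.\ $\mathcal{F}_{o}$) in the Hausdorff metric by $C^2_+$ bodies of the same class and using the continuity of $\mu^{c/n}$ under Hausdorff convergence of convex bodies.

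The step I expect to be the only genuine technical work is establishing the twice‑differentiability and positivity of $s\mapsto\mu(K_s)$ along the perturbation $h+s\psi$; the rest is bookkeeping (linearity of support functions under Minkowski combinations, the calculus criterion for concavity, and the transport through $m\mapsto m^{c/n}$) together with routine approximation.
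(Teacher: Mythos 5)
Your proposal is correct and follows essentially the same route as the paper: the identification $K_{\lambda s_0+(1-\lambda)s_1}=\lambda K_{s_0}+(1-\lambda)K_{s_1}$ via linearity of support functions for the ``only if'' direction, and the reparametrization around interior points of $I$ to upgrade the pointwise second-derivative condition to concavity on all of $I$ for the converse, concluding with $h=h_K$, $\psi=h_L-h_K$, $s=1-\lambda$. Your explicit computation of $G''$ and the closing Hausdorff-approximation remark are details the paper handles by citation and by its standing smoothness assumptions, respectively, but the argument is the same.
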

\begin{proof}
Assume first that $\mu$ satisfies \eqref{GZc}. The equality $h_{K_s}=h+s\psi$, $s\in I$, and the linearity of support functions with respect to Minkowski addition, imply that for every $s,t\in I$ and for every $\lambda\in[0,1]$
$$
K_{\lambda s+(1-\lambda)t}=\lambda K_s+(1-\lambda)K_t.
$$
The inequality (\ref{GZc}) implies
$$
\mu(K_{\lambda s+(1-\lambda)t})^{\frac{c}{n}}=\mu(\lambda K_s+(1-\lambda)K_t)^{\frac{c}{n}}\geq 
\lambda \mu(K_s)^{\frac{c}{n}}+(1-\lambda)\mu(K_t)^{\frac{c}{n}},
$$
which means that the function $\mu(K_s)^{\frac{c}{n}}$ is concave in $s$ on $I$. This implies (\ref{dimbmeq}). We add that $\mu(K_s)$ is twice differentiable in $s$, in view of our smoothness assumptions on $K$ and $\psi$. This fact can be observed, for example, using Lemma 6.1 from \cite{CLM}, in which $\mu(K)$ is expressed in terms of the support function of $K$. Plugging $h+s\psi$ into this formula gives a twice differentiable function in $s$. In the notation of Lemma 6.1 in \cite{CLM}, $F$ stands for the density of $\mu$.

Conversely, suppose that for every system $\mathcal{K}(h,\psi,I)$ with $K_s \in \mathcal{F}$, whenever $s\in I,$ the function $\mu(K_s)^{\frac{c}{n}}$ has 
non-positive second derivative at $0$, i.e. \eqref{dimbmeq} holds. We observe that this implies concavity of $\mu(K_s)^{\frac{c}{n}}$ on 
the entire interval $I$. Indeed, given $s_0$ in the interior of $I$, 
consider $\tilde{h}=h+s_0\psi$, and define a new system $\tilde{\mathcal{K}}(\tilde{h},\psi,J)$, where $J$ is a new interval such that
$\tilde{h}+s\psi=h+(s+s_0)\psi\in C^{2}_{+}$ for every $s\in J$. Then the second derivative of
$\mu(K_s)^{\frac{c}{n}}$ at $s=s_0$ is negative, as it is equal to the second derivative of $\mu(\tilde{K}_s)^{\frac{c}{n}}$ at $s=0$.
Thus (\ref{dimbmeq}) implies  concavity of $s \to \mu(K_s)^{\frac{c}{n}}$ on $[0,1]$ :
$$
\mu^{\frac{c}{n}}(K_s) \ge s  \mu^{\frac{c}{n}}(K_1) + (1-s) \mu^{\frac{c}{n}}(K_0), \ \forall s \in [0,1].
$$
Take $s=1-\lambda$,   $h=h_K$, $\psi=h_L-h_K$ and observe that $K_s = \lambda K + (1-\lambda) L$. Therefore (\ref{GZc}) holds. This completes the proof.
\end{proof}

The normal vector to the boundary of $K$ at the point $x$ will be denoted by $n_x$. Recall our assumption that $K$ is strictly convex and $C^2-$smooth, so the outward unit normal vector is unique; the general case may be derived by approximation.  We shall write 
$$ \mu_{\partial K} (x)=e^{-V(x)} \cdot \mathcal{H}^{n-1}|_{\partial K},
$$ 
where $\mathcal{H}^{n-1}$ stands for the $(n-1)$-dimensional Hausdorff measure; the notation $\nabla_{\partial K}$ means the boundary gradient (i.e., the projection of the gradient onto the support hyperplane). 
The second fundamental form of $\partial K$ will be denoted by $\rm{II}$, and the weighted mean curvature at a point $x$ is given by
$$H_x= tr({\rm II})-\langle \nabla V, n_x\rangle.$$

The following proposition was shown by  Kolesnikov and Milman \cite{KM2} (see the proof of Theorem 6.6.):
\begin{proposition}\label{derivatives}
Let $f:\partial K\rightarrow \R$ be given by $f(x)=\psi(n_x)$. Then
$$\mu(K_s)'|_{s=0}=\int_{\partial K}  f(x)  \,d\mu_{\partial K} (x);$$
$$\mu(K_s)''|_{s=0}=\int_{\partial K}  \left(H_x f^2-\langle \mbox{\rm{II}}^{-1}\nabla_{\partial K}  f,  \nabla_{\partial K}  f\rangle \right) \,d\mu_{\partial K} (x).$$
\end{proposition}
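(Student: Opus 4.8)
The plan is to derive both formulas from one variational computation: differentiate $\mu(K_s)=\int_{K_s}e^{-V}\,dx$ once to produce a boundary integral, then differentiate that boundary integral once more by the transport formula for integrals over a moving hypersurface. The first thing to pin down is the normal velocity of $\partial K_s$. Since $h_{K_s}=h+s\psi$, the supporting hyperplane of $K_s$ with outer unit normal $u\in\sfe$ is $\{x:\langle x,u\rangle=h(u)+s\psi(u)\}$, so it translates in the direction $u$ with speed $\psi(u)$; hence the point of $\partial K_s$ with outer normal $u$ (namely $\Phi_s(u)=\nabla h_s(u)$, the gradient of the $1$-homogeneous extension) moves with Eulerian normal speed $\psi(u)$ — equivalently, the normal speed at $x\in\partial K$ is $f(x)=\psi(n_x)$, and this value does not depend on $s$. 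By the classical first variation of a weighted volume, $\frac{d}{ds}\int_{K_s}g\,dx=\int_{\partial K_s}g\cdot(\text{normal speed})\,d\mathcal{H}^{n-1}$ for fixed smooth $g$; taking $g=e^{-V}$ and $s=0$ gives $\mu(K_s)'|_{s=0}=\int_{\partial K}f\,d\mu_{\partial K}$.

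For the second derivative I would differentiate $\mu(K_s)'=\int_{\partial K_s}e^{-V}\,\alpha_s\,d\mathcal{H}^{n-1}$, where $\alpha_s$ is the Eulerian normal-speed field (equal to $\psi\circ n$ in the sense above). The transport theorem for a hypersurface moving with normal speed $\beta$ reads $\frac{d}{ds}\int_{\Sigma_s}\phi_s\,d\mathcal{H}^{n-1}=\int_{\Sigma_s}\big(\dot\phi_s+{\rm tr}({\rm II})\,\beta\,\phi_s\big)\,d\mathcal{H}^{n-1}$, where the overdot is the material derivative along the normal flow and ${\rm tr}({\rm II})$ is the unweighted mean curvature. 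Applying it with $\phi_s=e^{-V}\alpha_s$ and $\beta=\alpha_s$, and using that $e^{-V}$ has material derivative $-\langle\nabla V,n_x\rangle\,\beta\,e^{-V}$ along the normal flow, at $s=0$ I obtain
$$\mu(K_s)''|_{s=0}=\int_{\partial K}\Big(\big({\rm tr}({\rm II})-\langle\nabla V,n_x\rangle\big)f^2+\dot\alpha_0\Big)\,d\mu_{\partial K}.$$
Since ${\rm tr}({\rm II})-\langle\nabla V,n_x\rangle=H_x$, it remains only to compute the material derivative $\dot\alpha_0$.

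This rests on two standard facts. First, for a hypersurface moving with normal speed $\beta$, the unit normal evolves along the normal flow by $\dot n_x=-\nabla_{\partial K}\beta$; since $\alpha_s=\psi\circ n$ and $\dot n_x\perp n_x$, this gives $\dot\alpha_0=\langle\nabla_{\sfe}\psi,\dot n_x\rangle=-\langle\nabla_{\sfe}\psi,\nabla_{\partial K}f\rangle$, with the spherical gradient $\nabla_{\sfe}\psi$ viewed as a vector in $n_x^{\perp}=T_x\partial K$. Second, $f$ is the composition of $\psi$ with the Gauss map $x\mapsto n_x$, whose differential is the self-adjoint shape operator represented by ${\rm II}$, so the chain rule gives $\nabla_{\partial K}f={\rm II}\,\nabla_{\sfe}\psi$, i.e. $\nabla_{\sfe}\psi={\rm II}^{-1}\nabla_{\partial K}f$. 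Substituting, $\dot\alpha_0=-\langle{\rm II}^{-1}\nabla_{\partial K}f,\nabla_{\partial K}f\rangle$, which together with the display above yields the asserted second formula.

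The delicate point is the rigorous use of the transport theorem and of the identity $\dot n_x=-\nabla_{\partial K}(\text{normal speed})$ in this precise setting: although the support-function perturbation moves the labelled boundary point $\Phi_s(u)$ with the full velocity $\nabla\psi(u)=\psi(u)\,u+\nabla_{\sfe}\psi(u)$, which has a tangential component $\nabla_{\sfe}\psi(u)$ as well, only the Eulerian normal part $\psi(u)$ should enter the derivative of the surface integral, and all signs must be tracked. An alternative, more self-contained route avoids the moving-surface calculus: parametrize $\partial K_s$ by the inverse Gauss map $\Phi_s=\nabla h_s$, which is affine in $s$, write $\mu(K_s)'=\int_{\sfe}e^{-V(\Phi_s(u))}\,\psi(u)\,\det\big(\nabla^2_{\sfe}h_s(u)+h_s(u){\rm Id}\big)\,du$, differentiate under the integral with Jacobi's formula, and then reduce the resulting terms to the stated intrinsic form by an integration by parts over the closed hypersurface $\partial K$ using the divergence-free property of the reverse Weingarten tensor ${\rm II}^{-1}=\nabla^2_{\sfe}h+h\,{\rm Id}$, a consequence of the Codazzi equations; this route is closer to the support-function framework of the works cited above.
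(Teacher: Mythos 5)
Your argument is correct, and it is essentially the argument behind the result as proved in the cited source: the paper itself gives no proof of Proposition \ref{derivatives} but refers to Kolesnikov--Milman \cite{KM2}, whose derivation is exactly this second-variation computation under the normal flow with Eulerian normal speed $\psi(n_x)$, using $\dot n=-\nabla_{\partial K}f$ and the Weingarten identity $\nabla_{\partial K}f={\rm II}\,\nabla_{\sfe}\psi$ to produce the $-\langle{\rm II}^{-1}\nabla_{\partial K}f,\nabla_{\partial K}f\rangle$ term. You correctly flag and resolve the only delicate point, namely that the support-function parametrization moves points with a tangential component while the transport theorem only sees the normal speed.
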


\begin{definition}
For a fixed class $\mathcal{F}$ of convex sets which is closed under dilates, and a convex body $K\in \mathcal{F}$, we consider a class $C_{\mathcal{F}}(K)$ of $C^2$-smooth real-valued functions on $\partial K$, given by
$$C_{\mathcal{F}}(K)=\{f(x)=h_L(n_x)-h_K(n_x):\,L\in\mathcal{F},\,t>0\}\cap C^2(\partial K).$$
\end{definition}

Lemma \ref{key_lemma_BM} and Proposition \ref{derivatives} imply:
\begin{cor}\label{equiv-const}
Fix a class $\mathcal{F}$ of convex sets in $\R^n$ which is closed under Minkowski convex combinations. Suppose that for any convex body $K \in \mathcal{F}$ and for any function $f(x)\in C_{\mathcal{F}}(K)$, %having the form
%$f = \psi(n_x)$ such that the family of support functions $h_K + s \psi$ defines a one-parameter family
%of convex sets $K_s$, with $K_s \in \mathcal{F}$ for small values of $s$,    one has
\begin{equation}\label{infin-1}
\int_{\partial K}  \left(H_x f^2-\langle \mbox{\rm{II}}^{-1}\nabla_{\partial K}  f,  \nabla_{\partial K}  f\rangle\right)  \,d\mu_{\partial K} (x)-
\frac{n-C}{n\mu(K)}\left(\int_{\partial K}  f(x)  \,d\mu_{\partial K} (x)\right)^2\leq 0.
\end{equation}
Then $$C_0(\mu,\mathcal{F})\geq C.$$
\end{cor}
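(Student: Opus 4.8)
The plan is to combine Proposition~\ref{derivatives} with Lemma~\ref{key_lemma_BM}, taking $c=C$; essentially no new computation is needed, only a matching-up of the two classes of test functions.

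First I would record that the left-hand side of \eqref{infin-1} is homogeneous of degree $2$ in $f$: replacing $f$ by $tf$ with $t\neq 0$ multiplies it by $t^2$. Hence \eqref{infin-1} holds for a given $f$ if and only if it holds for every nonzero scalar multiple of $f$. This is what lets me relate the functions $f\in C_{\mathcal F}(K)$ in the hypothesis to the perturbations $\psi$ occurring in one-parameter families. Concretely, let $\mathcal K(h,\psi,I)$ be any such family with $K_0=K\in\mathcal F$ and $K_s\in\mathcal F$ for all $s\in I$. Pick a small $s_0\neq 0$ with $s_0\in I$; then $K_{s_0}\in\mathcal F$ is a $C^2_+$ body with support function $h+s_0\psi$, so the function $x\mapsto s_0\psi(n_x)=h_{K_{s_0}}(n_x)-h_K(n_x)$ lies in $C_{\mathcal F}(K)$ (it is $C^2$ on $\partial K$ by the standing smoothness hypotheses on $K$ and $\psi$). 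Applying the hypothesis \eqref{infin-1} to this function and then dividing by $s_0^2>0$, I obtain \eqref{infin-1} for $f(x)=\psi(n_x)$ itself, with the constant $C$.

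Next I would multiply this instance of \eqref{infin-1} by $\mu(K)=\mu(K_0)$ and substitute the two identities of Proposition~\ref{derivatives}, namely $\int_{\partial K}f\,d\mu_{\partial K}=\mu(K_s)'|_{s=0}$ and $\int_{\partial K}\big(H_xf^2-\langle \mathrm{II}^{-1}\nabla_{\partial K}f,\nabla_{\partial K}f\rangle\big)\,d\mu_{\partial K}=\mu(K_s)''|_{s=0}$. This converts \eqref{infin-1} verbatim into condition \eqref{dimbmeq} with $c=C$. Since the family $\mathcal K(h,\psi,I)$ was arbitrary subject to remaining in $\mathcal F$, Lemma~\ref{key_lemma_BM} (in the ``if'' direction) then yields $\mu(\lambda K+(1-\lambda)L)^{C/n}\ge \lambda\mu(K)^{C/n}+(1-\lambda)\mu(L)^{C/n}$ for all $C^2_+$ bodies $K,L\in\mathcal F$ and all $\lambda\in[0,1]$, and hence for all $K,L\in\mathcal F$ by the approximation convention adopted above. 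By Definition~\ref{c} this is precisely the statement $C_0(\mu,\mathcal F)\ge C$.

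Since the argument is a substitution, there is no substantial obstacle; the only point requiring a little care is the first step, i.e. verifying that every admissible perturbation $\psi$ is, up to the nonzero factor $s_0$, realized as a difference of support functions $h_L(n_x)-h_K(n_x)$ with $L=K_{s_0}\in\mathcal F$ of class $C^2_+$ (so that \eqref{infin-1} is genuinely applicable), and then invoking the degree-$2$ homogeneity of \eqref{infin-1} to strip off that factor. Once this is done, the identification with Proposition~\ref{derivatives} and the appeal to Lemma~\ref{key_lemma_BM} are immediate.
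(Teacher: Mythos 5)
Your proposal is correct and follows exactly the route the paper intends: the corollary is stated there as a direct consequence of Lemma~\ref{key_lemma_BM} and Proposition~\ref{derivatives}, and your write-up simply makes explicit the matching of perturbations $\psi$ with functions in $C_{\mathcal F}(K)$ via the degree-$2$ homogeneity of \eqref{infin-1}. No gaps.
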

%In our proofs below, we shall generally verify statements for more general classes of functions, and will make sure that the functions from $C_{\mathcal{F}}(K)$ are included in the consideration.
%Indeed, by Lemma \ref{key_lemma_BM}, it is enough to verify (\ref{infin-1}) for all $C^2$-functions $f$ on $\partial K$, given in the form $f=\psi$

\subsection{Integration by parts}
The following Bochner-type identity was obtained by Kolesnikov and Milman. It is a particular case of Theorem 1.1 in \cite{KM1}
(note that $\rm{Ric}_{\mu}  = \nabla^2 \it{V}$ in our case).  This is a generalization of a classical result of R.C.~Reilly.

\begin{proposition}\label{raileyprop}
Let $u \in C^2(K)$  and $u_{n} = \langle \nabla u, n_x \rangle \in C^1(\partial K)$. Then  
\begin{align}
\label{railey}
\int_{K} (L u)^2 d \mu & =\int_{K} \left(||\nabla^2 u||^2+\langle\nabla^2 V \nabla u, \nabla u\rangle\right) d \mu+
\\&  \nonumber\int_{\partial K}  (H_x u_n^2 -2\langle \nabla_{\partial K} u, \nabla_{\partial K}  u_n\rangle +\langle \mbox{\rm{II}} \nabla_{\partial K} u, \nabla_{\partial K}  u\rangle )  \,d\mu_{\partial K} (x).
\end{align}
\end{proposition}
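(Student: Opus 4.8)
The plan is to prove Proposition~\ref{raileyprop} as the weighted (Bakry--\'Emery) analogue of R.~C.~Reilly's integral identity; since in our flat setting the Bakry--\'Emery Ricci tensor is exactly $\nabla^2 V$, one may in fact simply quote Theorem~1.1 of \cite{KM1}, but for completeness I indicate the self-contained route. It rests on a pointwise Bochner identity followed by two applications of the weighted divergence theorem and one integration by parts on the closed manifold $\partial K$.

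\textbf{Step 1 (pointwise weighted Bochner identity).} Starting from the classical Bochner formula $\tfrac12\Delta|\nabla u|^2=\|\nabla^2 u\|^2+\langle\nabla u,\nabla\Delta u\rangle$ and expanding $\nabla\langle\nabla u,\nabla V\rangle=\nabla^2 u\,\nabla V+\nabla^2 V\,\nabla u$, a short direct computation gives, pointwise in the interior of $K$,
\[
\tfrac12\,L|\nabla u|^2=\|\nabla^2 u\|^2+\langle\nabla u,\nabla Lu\rangle+\langle\nabla^2 V\,\nabla u,\nabla u\rangle.
\]
(This uses $u\in C^3$; the general $C^2$ case follows by approximation, as allowed by the standing assumptions.)

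\textbf{Step 2 (integration by parts over $K$).} Writing $\mathrm{div}_\mu X:=\mathrm{div}\,X-\langle X,\nabla V\rangle$, one has $Lu=\mathrm{div}_\mu\nabla u$ and $\int_K\mathrm{div}_\mu X\,d\mu=\int_{\partial K}\langle X,n_x\rangle\,d\mu_{\partial K}$ for every $C^1$ field $X$. Applying this to $X=\nabla|\nabla u|^2$ (so the left side of Step~1 integrates to $\int_{\partial K}\langle\nabla^2 u\,\nabla u,n_x\rangle\,d\mu_{\partial K}$) and to $X=Lu\,\nabla u$ (using $\mathrm{div}_\mu(Lu\,\nabla u)=\langle\nabla Lu,\nabla u\rangle+(Lu)^2$, whence $\int_K\langle\nabla Lu,\nabla u\rangle\,d\mu=\int_{\partial K}Lu\cdot u_n\,d\mu_{\partial K}-\int_K(Lu)^2\,d\mu$), and integrating the Bochner identity over $K$ against $\mu$, yields
\[
\int_K(Lu)^2\,d\mu=\int_K\bigl(\|\nabla^2 u\|^2+\langle\nabla^2 V\,\nabla u,\nabla u\rangle\bigr)\,d\mu+\int_{\partial K}\bigl(Lu\cdot u_n-\langle\nabla^2 u\,\nabla u,n_x\rangle\bigr)\,d\mu_{\partial K}.
\]

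\textbf{Step 3 (reduction of the boundary integrand).} Work at a boundary point in an orthonormal frame $e_1,\dots,e_{n-1},\,e_n=n_x$ adapted to $\partial K$. The standard splitting of the Euclidean Laplacian near a hypersurface gives $\Delta u=\Delta_{\partial K}u+(\mathrm{tr}\,\mathrm{II})u_n+u_{nn}$, and splitting $\langle\nabla u,\nabla V\rangle$ into tangential and normal parts gives $Lu=L_{\partial K}u+H_x u_n+u_{nn}$, where $L_{\partial K}$ is the weighted Laplacian of $\partial K$ with density $e^{-V|_{\partial K}}$ and $H_x=\mathrm{tr}\,\mathrm{II}-\langle\nabla V,n_x\rangle$. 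Differentiating $u_n=\langle\nabla u,n_x\rangle$ along $e_\alpha$ and using $\nabla_{e_\alpha}n_x=\mathrm{II}(e_\alpha,\cdot)$ gives $(\nabla^2 u)(e_\alpha,n_x)=(\nabla_{\partial K}u_n)_\alpha-(\mathrm{II}\,\nabla_{\partial K}u)_\alpha$, hence
\[
\langle\nabla^2 u\,\nabla u,n_x\rangle=u_{nn}u_n+\langle\nabla_{\partial K}u,\nabla_{\partial K}u_n\rangle-\langle\mathrm{II}\,\nabla_{\partial K}u,\nabla_{\partial K}u\rangle.
\]
Subtracting, the $u_{nn}u_n$ terms cancel and $Lu\cdot u_n-\langle\nabla^2 u\,\nabla u,n_x\rangle=H_x u_n^2+u_n L_{\partial K}u-\langle\nabla_{\partial K}u,\nabla_{\partial K}u_n\rangle+\langle\mathrm{II}\,\nabla_{\partial K}u,\nabla_{\partial K}u\rangle$. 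Integrating over the closed manifold $\partial K$ and using self-adjointness of $L_{\partial K}$ for $\mu_{\partial K}$, namely $\int_{\partial K}u_n L_{\partial K}u\,d\mu_{\partial K}=-\int_{\partial K}\langle\nabla_{\partial K}u_n,\nabla_{\partial K}u\rangle\,d\mu_{\partial K}$ (no boundary term, since $\partial K$ is closed), converts $H_x u_n^2+u_n L_{\partial K}u-\langle\nabla_{\partial K}u,\nabla_{\partial K}u_n\rangle+\langle\mathrm{II}\,\nabla_{\partial K}u,\nabla_{\partial K}u\rangle$ into $H_x u_n^2-2\langle\nabla_{\partial K}u,\nabla_{\partial K}u_n\rangle+\langle\mathrm{II}\,\nabla_{\partial K}u,\nabla_{\partial K}u\rangle$ after integration. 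Substituting this boundary term into the identity of Step~2 gives exactly (\ref{railey}).

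I expect the main obstacle to be the boundary computation in Step~3: one must carefully and consistently track the tangential/normal decomposition of the Euclidean Laplacian and Hessian, in particular the mixed Hessian term $(\nabla^2 u)(e_\alpha,n_x)$ and the appearance of the shape operator $\mathrm{II}$, while keeping all sign conventions compatible with the definition $H_x=\mathrm{tr}\,\mathrm{II}-\langle\nabla V,n_x\rangle$ used throughout the paper. Everything else (the pointwise Bochner identity and the two weighted integrations by parts) is routine.
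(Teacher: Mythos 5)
Your proposal is correct. Note that the paper itself does not prove Proposition~\ref{raileyprop}: it simply quotes it as a special case of Theorem~1.1 of \cite{KM1} (the generalized Reilly formula with $\mathrm{Ric}_\mu=\nabla^2 V$), which is also the first option you offer. Your self-contained derivation is the standard proof of that result specialized to Euclidean space, and every step checks out: the weighted Bochner identity $\tfrac12 L|\nabla u|^2=\|\nabla^2 u\|^2+\langle\nabla u,\nabla Lu\rangle+\langle\nabla^2 V\nabla u,\nabla u\rangle$ follows exactly as you say from the classical Bochner formula and the expansion of $\nabla\langle\nabla u,\nabla V\rangle$; the two weighted divergence-theorem applications are routine; and the boundary reduction via $Lu=L_{\partial K}u+H_xu_n+u_{nn}$, the Weingarten relation $(\nabla^2u)(e_\alpha,n_x)=(\nabla_{\partial K}u_n)_\alpha-(\mathrm{II}\,\nabla_{\partial K}u)_\alpha$, and the final integration by parts on the closed manifold $\partial K$ produce precisely the integrand $H_xu_n^2-2\langle\nabla_{\partial K}u,\nabla_{\partial K}u_n\rangle+\langle\mathrm{II}\,\nabla_{\partial K}u,\nabla_{\partial K}u\rangle$, with signs consistent with the paper's convention $H_x=\mathrm{tr}\,\mathrm{II}-\langle\nabla V,n_x\rangle$. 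The only point deserving a word of care is the regularity bookkeeping you already flag (Step~1 needs $u\in C^3$ and Step~2 needs $\nabla^2u$ up to the boundary, both handled by approximation under the paper's standing smoothness assumptions).
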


\subsection{Proof of Lemma \ref{lemma1}.}

In view of  Corollary \ref{equiv-const}  it is sufficient to verify (\ref{infin-1}) with $C=C_1(\mu,\mathcal{F})$.
Fix a $C^1$ function $f:\partial K\rightarrow \R$. 
 In the case when $\int_{\partial K} f \,d\mu_{\partial K}=0$, we automatically get (\ref{infin-1}) with an arbitrary constant $C$, as a consequence of the log-concavity of $\mu$ (see Theorem 1.1 in \cite{KM2}). Indeed, in this case (\ref{infin-1}) is simply identical to the infinitesimal form of log-concavity.
 
 %Therefore, it remains to consider the case $\int_{\partial K} f \,d\mu_{\partial K}\neq 0.$

If $\int_{\partial K} f \,d\mu_{\partial K} \ne 0$, then after a suitable renormalization one can assume that $\int_{\partial K} f \,d\mu_{\partial K} =\mu(K)$.
 
Let $u$ be the solution of the Poisson equation
 $$Lu=1$$
with the Neumann boundary condition for every $x\in\partial K$
$$\langle \nabla u(x), n_x\rangle = f(x).$$  
We refer to subsection 2.4  in \cite{KM2}, where the reader can find the precise statement ensuring well-posedness of this equation and several references
to classical PDE's textbooks for further reading.  

%Symmetry of $f$ and $K$ ensures symmetry of $u$. Indeed, we immediately get by  symmetry that $(u(x) + u(-x))/{2}$ is a solution as well. Uniqueness of solution implies
%$u(-x)=u(x)$.
%Note that the necessary and sufficient consistency condition
%$$
%\mu(K) = \int_{K} Lu 
%\,d\mu = \int_{\partial K} f d \mu_{\partial K},$$
%arising from the integration by parts, is fulfilled.

Applying (\ref{railey}) and the definition of $C_1(\mu, \mathcal{F})$ one obtains
$$
\mu(K)\ge \frac{C_1(\mu,\mathcal{F})}{n} \mu(K)  + \int_{\partial K}  (H_x f^2 -2\langle \nabla_{\partial K} u, \nabla_{\partial K}  f\rangle +\langle \mbox{\rm{II}} \nabla_{\partial K} u, \nabla_{\partial K}  u\rangle )  \,d\mu_{\partial K} (x).
$$

%Suppose, for some constant $C,$ there exists a $C^2$-smooth even function $F:K\rightarrow \R$, with $\int_K F \,d\mu,$ such that for all $u$ with $Lu=F,$ we have
%\begin{equation}\label{eq1}
%\int_K ||\nabla^2 u||_{HS}^2+\langle\nabla^2 V \nabla u, \nabla u\rangle \,d\mu\geq \int_K F^2 \,d\mu-\frac{n-c}{n}\left(\int_K F \,d\mu\right)^2.
%\end{equation}

%Then, by homogeneity of (\ref{eq1}), we may find $F$ which satisfies this assumption, and in addition has $\int_K F \,d\mu$ equal to any prescribed value: indeed, to achieve this, pick any such $F$ and consider its scaling.

%Pick $F:K\rightarrow \R$ be a $C^2$-smooth function which satisfies (\ref{eq2}) along with
%\begin{equation}\label{inteq}
%\int_K F \,d\mu=\int_{\partial K}  f \,d\mu_{\partial K}.
%\end{equation}

%In view of (\ref{inteq}), there exists a solution $u$ of the Poisson equation $$Lu=F$$
%with the Neumann boundary condition
%$$\langle \nabla u(x), n_x\rangle = f(x),$$
%where $n_x$ stands for the normal vector at $x\in\partial K.$ Further, by the standard regularity results (see, e.g. \cite{evans}), this solution is twice differentiable. Moreover, since $K$, $F$ and $f$ are even, the solution is even as well \cite{evans}.

%As (\ref{eq1}) holds for an arbitrary function $u$ with $Lu=F$, it in particular holds for the solution of our Neumann system, for any given $f$.

Recall that for a symmetric positive-definite matrix $A$,
\begin{equation}\label{matr}
\langle Ax,x\rangle+\langle A^{-1} y, y\rangle \geq 2\langle x,y\rangle.
\end{equation}
Indeed, choosing an orthogonal frame making $A$ diagonal with eigenvalues $\lambda_i$ we reduce (\ref{matr})
to the  inequality 
$$\sum_{i=1}^n \lambda_i x^2_i + \sum_{i=1}^n {y^2_i}/{\lambda_i} \ge 2 \sum_{i=1}^n x_i y_i,$$ 
which follows from the arithmetic-geometric mean and Cauchy-Schwarz inequalities. 

Applying (\ref{matr}) with $A=\rm{II}$, $x=\nabla_{\partial K} u$ and $y=\nabla_{\partial K} f,$ we obtain
$$
\int_{\partial K}  \left(H_x f^2-\langle \mbox{\rm{II}}^{-1}\nabla_{\partial K}  f,  \nabla_{\partial K}  f\rangle\right)  \,d\mu_{\partial K} (x)-
\frac{n-C_1(\mu,\mathcal{F})}{n} \mu(K)\leq 0.$$
The result of the lemma now follows from Corollary \ref{equiv-const}. $\square$

%Fix a $C^2-$smooth function $f:\partial K\rightarrow \R$. 
%Let $u$ be  the corresponding solution 
%of the Poisson equation 
%$$
%Lu = \frac{\int_{\partial K} f d \mu_{\partial K}}{\mu(K)}
%$$with Neumann boundary condition
%$$\langle \nabla u(x), n_x\rangle = f(x),$$ 
%where $n_x$ stands for the normal vector at $x\in\partial K.$
%Note that 
%$$
%\int_{K} Lu d \mu = \int_{\partial K} f d \mu_{\partial K}.
%$$
%$

%Recall that for a positive-definite matrix $A$,
%\begin{equation}\label{matr}
%\langle Ax,x\rangle+\langle A^{-1} y, y\rangle \geq 2\langle x,y\rangle.
%\end{equation}

%Combining (\ref{infin-1}), (\ref{railey}) and (\ref{matr}) applied with $A=\textrm{II}$, we note that (\ref{infin-1}) with constant $c$ follows from 
%\begin{equation}\label{eq1}
%\int_{K} \bigl(  ||\nabla^2 u||_{HS}^2+\langle\nabla^2 V \nabla u, \nabla u\rangle\bigr) d \mu \geq \frac{c\bigl( \int_{\partial K} f d \mu_{\partial K}\bigr)^2}{n\mu(K)}.
%\end{equation}

\section{Proof of Lemma \ref{lemma2}}

Firstly, suppose that $u$ is a $C^2$-smooth function on a symmetric convex set $K$ with $Lu=1_K$ on $K$. 

Since $K$ is symmetric and $V$ is even, the function $u$ is even as well.  Indeed, we get by symmetry that $(u(x) + u(-x))/{2}$ is a solution to our system as well. Uniqueness of the solution implies $u(-x)=u(x)$.

To prove the lemma, it suffices to show that 
\begin{equation}\label{toshow}
\int_{K} ||\nabla^2 u||^2+\langle\nabla^2 V \nabla u, \nabla u\rangle\, d \mu\geq \frac{1}{n}\int_K \frac{1}{\frac{|\nabla V|^2}{(1+\varepsilon)nk_1}+\frac{1}{1-\varepsilon}} \,d\mu.
\end{equation}

By the Cauchy-Schwarz inequality,
\begin{equation}\label{hessian}
\int_{K} ||\nabla^2 u||^2 d \mu \geq \frac{1}{n} \int_{K} |\Delta u|^2 d \mu.
\end{equation}
Indeed, to see why (\ref{hessian}) is true, recall that $||\nabla^2 u||^2=\sum_{i=1}^n \lambda^2_i$, where $\lambda_1,...,\lambda_n$ are the eigenvalues of $\nabla^2 u$, and recall also that $\Delta u=\sum_{i=1}^n \lambda_i$.
%Cordero, Fradelizi and Maurey showed \cite{B-conj} that for any even centered function $F$,
%\begin{equation}\label{CFM}
%\int F^2 \,d\gamma(x) \leq \frac{1}{2k_1}\int |\nabla F|^2 \,d\gamma(x).
%\end{equation}
%Applying (\ref{CFM}) to $F=\nabla u$ (coordinate-vise) we get

Note that the symmetry of $u$ implies
\begin{equation}\label{symmetry1}
\int_{K} u_{x_i} d \mu=0.
\end{equation}
By the Brascamp--Lieb inequality (see \cite{BGL}, Theorem 4.9.1, or \cite{BrLi}), we have
$$
\int_{K} u^2_{x_i} d \mu \le \int_{K} \langle (\nabla^2 V)^{-1} \nabla u_{x_i}, \nabla u_{x_i} \rangle d \mu.
$$
Applying the lower bound for $\nabla^2 V$ and summing over $i=1,...,n$, we get
\begin{equation}\label{CFM+hess}
\int_{K} ||\nabla^2 u||^2 d \mu \geq  k_1\int_{K} |\nabla u|^2  \,d\mu.
\end{equation}
In addition, we observe that the lower bound $\nabla^2 V\geq k_1 {\rm Id}$ also yields
\begin{equation}\label{addition}
\int_{K} \langle\nabla^2 V \nabla u, \nabla u\rangle d \mu \geq  k_1\int_{K} |\nabla u|^2 d \mu.
\end{equation}
Let $\varepsilon>0$. Multiplying (\ref{hessian}) by $1-\varepsilon$, multiplying (\ref{CFM+hess}) by $\varepsilon$, summing the resulting two inequalities, and then using (\ref{addition}), we arrive at
\begin{equation}\label{eq2}
\int_{K} (||\nabla^2 u||^2+\langle\nabla^2 V \nabla u, \nabla u\rangle) d \mu\geq \int_{K} \Bigl( \frac{1-\varepsilon}{n} |\Delta u|^2 +k_1(1+\varepsilon) |\nabla u|^2  \Bigr) d \mu. 
\end{equation}
Writing 
$$\Delta u=Lu+\langle \nabla V, \nabla u\rangle=1_K+\langle \nabla V, \nabla u\rangle,$$ 
we get that the right hand side of (\ref{eq2}) equals
\begin{equation}\label{eq3}
\int_{K}  \Bigl[ \frac{1-\varepsilon}{n}  1+2\langle \nabla u, \frac{1-\varepsilon}{n}\nabla V\rangle+\langle A_{\varepsilon} \nabla u, \nabla u\rangle \Bigr] d \mu, 
\end{equation}
where
$$A_{\varepsilon}=\frac{1-\varepsilon}{n}\nabla V\otimes\nabla V+k_1(1+\varepsilon) {\rm Id}.$$
%Here $\nabla V\otimes\nabla V$ denotes the rank one orthogonal projection onto the span of $\nabla V,$ so that 
%$$(\nabla V\otimes\nabla V)\nabla u=\langle \nabla u,\nabla V\rangle \nabla V.$$
Note that $A_{\varepsilon}$ is positive semi-definite, since it is a sum of positive semi-definite matrices. Using (\ref{matr}) once again, this time with $A=A_{\varepsilon}$, $x=\nabla u$ and $$y=-\frac{1-\varepsilon}{n}  \nabla V,$$ we see that (\ref{eq3}) is greater than or equal to
 
\begin{equation}\label{eq4}
\int_{K} \frac{1-\varepsilon}{n}  \left(1-\frac{1-\varepsilon}{n}\langle A_{\varepsilon}^{-1}\nabla V, \nabla V\rangle\right) d \mu.
\end{equation} 

We observe that for any vector $z\in\R^n$ and for all $a, b\in \R,$
\begin{equation}\label{eq5}
\left( a {\rm Id}+b z\otimes z\right)^{-1}z=\frac{z}{a+b|z|^2}.
\end{equation}
 
Applying (\ref{eq5}) with $a=(1-\varepsilon)/n$, $b=k_1(1+\varepsilon),$ and $z=\nabla V$, we rewrite (\ref{eq4}) as 

\begin{equation}\label{eq6}
k_1(1+\varepsilon)\int_{K} \frac{1}{|\nabla V|^2+k_1 n\frac{1+\varepsilon}{1-\varepsilon}} d \mu
=  \int_{K} \frac{d \mu}{\frac{|\nabla V|^2}{k_1(1+\varepsilon)}+ \frac{n}{1-\varepsilon}}.
\end{equation}  
The proof of part (1) is complete.

Secondly, if the class $\mathcal{F}$ is arbitrary, we apply the same estimate with $\varepsilon=0$ and avoid using (\ref{CFM+hess}). Note that (\ref{CFM+hess}) is the only place where the symmetry was used. This completes the proof of part (2).
$\square$

%We conclude, that for $F=1_K,$ for all $u:K\rightarrow\R$ so that $Lu=F$, we have

%\begin{equation}\label{eq7}
%\int_{K} (||\nabla^2 u||_{HS}^2+\langle\nabla^2 V \nabla u, \nabla u\rangle) \geq \frac{C_2(\mu, \varepsilon)}{n} \,d\mu =\int_{K} F^2 d \mu -\frac{n-C_2(\mu, \varepsilon)}{n}\left(\int_{K} F d \mu\right)^2 ,
%\end{equation}
%and the lemma follows. 

%\begin{remark}\label{gamma2}
%For an arbitrary linear operator $L$, the bilinear operator $\Gamma$ is defined as follows:
%$$\Gamma(u,v)=\frac{1}{2}\left(L(uv)-uLv-vLu\right).$$
%Next, the operator
%$$\Gamma_2(u)=\frac{1}{2}L\Gamma(u,u)-\Gamma(u,Lu).$$
%The operator $L$ is said to satisfy curvature-dimension condition $CD(0,n)$ if for every function $u,$
%$$\Gamma_2(u)\geq \frac{1}{n}(Lu)^2;$$
%see more e.g. in Bakry, Ledoux \cite{BakLed}.

%One may check that for the weighted Laplace operator $L$ defined by (\ref{L}), one has
%$$\Gamma_2(u)=||\nabla^2 u||_{HS}^2+\langle\nabla^2 V \nabla u, \nabla u\rangle.$$
%Our proof implies the validity of (\ref{eq1}) with constant $c$, that is
%$$\int \Gamma_2(u)\geq \frac{c}{n}\int (Lu)^2 ,$$
%for any even function $u$, even log-concave measure $\mu$, and symmetric body $K,$ in the case when $Lu$ is constant. That is, %we show an ``average symmetric'' version of $CD(0,\frac{n}{c})$ for the weighted Laplace operator. The analogous statement %for $CD(0,n)$ would in fact imply the Conjecture of Gardner and Zvavitch.  
%\end{remark}

\section{Proof of Lemma \ref{lemma3}.}

We shall need the following lemma, where symmetry is used in the crucial way: namely, we use the simple fact that log-concave \emph{even} functions on the real line are concave at zero.

\begin{lemma}\label{k_2}
For a log-concave measure $\mu$ with density $e^{-V}$ and a convex body $K$, satisfying
\begin{equation}\label{condnosym}
\int_K \frac{\partial V}{\partial x_i} \,d\mu=0, 
\end{equation}
for all $i=1,...,n$, we have
$$\int_{K} |\nabla V(x)|^2 d \mu \leq \int_{K} \Delta V d \mu.$$
\end{lemma}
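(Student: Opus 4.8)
The plan is to reduce the $n$-dimensional inequality $\int_K |\nabla V|^2\,d\mu \le \int_K \Delta V\,d\mu$ to a one-dimensional statement about even log-concave functions, exploiting the hypothesis $\int_K \partial_i V\,d\mu = 0$ coordinate by coordinate. First I would observe that it suffices to prove, for each fixed index $i$, the inequality
$$
\int_K \left(\frac{\partial V}{\partial x_i}\right)^2 d\mu \le \int_K \frac{\partial^2 V}{\partial x_i^2}\,d\mu,
$$
since summing over $i=1,\dots,n$ yields the claim. Fixing $i$, I would slice $K$ along lines parallel to the $e_i$-axis: for $y$ in the orthogonal complement $e_i^\perp$, let $I_y = \{t\in\R : y + t e_i \in K\}$, which is an interval by convexity, and write the integrals via Fubini as iterated integrals over $I_y$ and then over $e_i^\perp$.

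On each slice, set $g(t) = e^{-V(y+te_i)}$; this is a log-concave function of $t\in I_y$ (restriction of a log-concave function to a line), so $-V$ restricted to the line is concave, i.e. $\partial_i^2 V \ge 0$ on $I_y$ is not automatic — rather $g'' g - (g')^2 \le 0$. The key identity is integration by parts in $t$ on the interval $I_y = [a,b]$: since $\partial_i(e^{-V}) = -(\partial_i V)e^{-V}$, we have
$$
\int_a^b \left(\frac{\partial^2 V}{\partial x_i^2} - \left(\frac{\partial V}{\partial x_i}\right)^2\right) e^{-V}\,dt = \int_a^b \partial_i\!\left(\frac{\partial V}{\partial x_i}\, e^{-V}\right) dt \cdot(-1)\cdot(-1) = \Bigl[-\tfrac{\partial V}{\partial x_i}e^{-V}\Bigr]_a^b,
$$
wait — more carefully, $\partial_i\big((\partial_i V)e^{-V}\big) = (\partial_i^2 V)e^{-V} - (\partial_i V)^2 e^{-V}$, so the left side equals $\big[(\partial_i V)\,e^{-V}\big]_a^b$ evaluated with the correct sign, i.e. the slice-wise difference is the boundary term $(\partial_i V)(b)\,g(b) - (\partial_i V)(a)\,g(a)$. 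Now $g'(t) = -(\partial_i V)(y+te_i)\,g(t)$, so this boundary term is exactly $g'(a) - g'(b)$. Integrating over $y\in e_i^\perp$, the total quantity $\int_K(\partial_i^2 V - (\partial_i V)^2)\,d\mu$ equals $\int_{e_i^\perp}(g_y'(a_y) - g_y'(b_y))\,dy$.

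It remains to show this is nonnegative. Here is where I expect the main obstacle: showing $g_y'(a_y) - g_y'(b_y) \ge 0$ slice-by-slice is false in general (a log-concave function on an interval can be increasing on one side), so the coordinate-wise nonnegativity of the boundary integral must be recovered globally, using the hypothesis $\int_K \partial_i V\,d\mu = 0$. Indeed, by the same integration by parts, $\int_K \partial_i V\,d\mu = \int_{e_i^\perp}(g_y(a_y) - g_y(b_y))\,dy$ — no: $\int_a^b (\partial_i V) e^{-V}\,dt = \int_a^b (-g') \,dt = g(a) - g(b)$, so the hypothesis says $\int_{e_i^\perp}(g_y(a_y)-g_y(b_y))\,dy = 0$. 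The remaining task is the genuinely one-dimensional and symmetry-driven point flagged in the text: because $V$ is even and $K$ symmetric, one should compare the slice at $y$ with the slice at $-y$ and use that the even log-concave function obtained by a suitable symmetrization/rearrangement argument is concave at $0$, forcing the aggregate boundary terms to have the right sign. Concretely, I would pair up $\pm y$, use evenness $g_{-y}(t) = g_y(-t)$ hence $a_{-y} = -b_y$, $g_{-y}'(a_{-y}) = -g_y'(b_y)$, and combine; the constraint then pins down enough to conclude $\int_{e_i^\perp}(g_y'(a_y)-g_y'(b_y))\,dy \ge 0$. Making this last combinatorial/analytic step rigorous — translating "even log-concave functions are concave at zero" into the needed bound on the boundary integral under the linear constraint — is the crux, and I would handle it by introducing the symmetrized slice function and applying concavity of its logarithm at the origin together with the vanishing first-moment condition.
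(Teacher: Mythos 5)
Your reduction to the coordinate-wise inequality and your integration by parts along slices are both correct: indeed $\int_K(\partial_i^2V-(\partial_iV)^2)\,d\mu=\int_{e_i^\perp}\bigl(g_y'(a_y)-g_y'(b_y)\bigr)\,dy$ and the hypothesis reads $\int_{e_i^\perp}\bigl(g_y(a_y)-g_y(b_y)\bigr)\,dy=0$. But the step you yourself flag as the crux is a genuine gap, and the resolution you sketch does not work. First, the lemma assumes neither that $K$ is symmetric nor that $V$ is even --- only the centering condition \eqref{condnosym} --- so a $\pm y$ pairing argument is not available in the stated generality. Second, even when symmetry is present, the pairing is a no-op: since $g_{-y}(t)=g_y(-t)$ and $I_{-y}=-I_y$, one computes $g_{-y}'(a_{-y})-g_{-y}'(b_{-y})=g_y'(a_y)-g_y'(b_y)$, so averaging over $\pm y$ reproduces exactly the quantity you started with and yields no cancellation; and as you note, a single slice can have $g_y'(a_y)-g_y'(b_y)<0$ (e.g.\ $g_y(t)=e^t$ on $[0,1]$). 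A single scalar constraint on the zeroth-order boundary terms cannot control the sign of the first-order boundary terms slice by slice.

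The missing ingredient is a \emph{global} use of log-concavity, namely Pr\'ekopa--Leindler. Set $g(t)=\int_K e^{-V(x+te_i)}\,dx$; this is precisely the function whose derivatives aggregate your boundary terms, since $g'(0)=\int_{e_i^\perp}\bigl(g_y(b_y)-g_y(a_y)\bigr)\,dy$ and $g''(0)=-\int_{e_i^\perp}\bigl(g_y'(a_y)-g_y'(b_y)\bigr)\,dy$. Slice-wise log-concavity of $g_y$ does not give log-concavity of $g$; the marginal theorem (Pr\'ekopa--Leindler applied to the jointly log-concave function $(x,t)\mapsto 1_K(x)e^{-V(x+te_i)}$) does. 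Then $g(0)g''(0)\le g'(0)^2=0$ by the hypothesis, which is exactly the inequality you need; summing over $i$ finishes the proof. This is the paper's argument (an equivalent route, noted in the paper's remark, is the Brascamp--Lieb inequality applied to $V_{x_i}$ for the measure $1_Ke^{-V}\,dx$). Without importing one of these global inequalities, your slicing scheme cannot close.
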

\begin{proof} Let $i\in\{1,...,n\}$. By  the Pr\'ekopa-Leindler inequality (\cite{Gar}, Theorem 4.2), the function
$$g(t)=\int_K e^{-V(x+te_i)} \,dx$$
is log-concave in $t.$ In particular,
\begin{equation}\label{g-logconc}
g(0)g''(0)-g'(0)^2\leq 0.
\end{equation}
Note that 
\begin{equation}\label{symmetry2}
g'(0)=-\int_K \frac{\partial V}{\partial x_i} e^{-V(x)} \,dx=0.
\end{equation}
Therefore, by (\ref{g-logconc}),
\begin{equation}\label{g''}
g''(0)=\int_K \left(-\frac{\partial^2 V}{\partial^2 x_i}+\left(\frac{\partial V}{\partial x_i}\right)^2 \right) e^{-V(x)} \,dx\leq 0.
\end{equation}
Applying (\ref{g''}) and summing over $i=1,...,n$, we obtain the conclusion of the lemma.
\end{proof}

\begin{remark}
Alternatively, Lemma \ref{k_2} follows directly for the Brascamb--Lieb inequality applied to the functions $V_{x_i}$:
$$
\int_{K} V^2_{x_i}d \mu \le \int \langle (D^2 V)^{-1} \nabla V_{x_i}, \nabla V_{x_i} \rangle d \mu = \int_{K} V_{x_i x_i} d \mu.
$$  
Here we use log-concavity of the measure $1_K e^{-V} dx$
\end{remark}

The next lemma shows that, in the case of the standard Gaussian measure, the conclusion of Lemma \ref{k_2} holds under an even weaker assumption of the sets containing the origin. Recall that a set $K$ is called star-shaped if it contains the interval $\{ t x, t \in [0,1] \}$ for every $x \in K$.

\begin{lemma}\label{nowGauss}
Suppose $K$ is a star-shaped body, and $\gamma$ is the standard Gaussian measure. Then
\begin{equation}\label{eq-key-new}
\int_K |x|^2 d\gamma(x)\leq n\gamma(K).
\end{equation}
\end{lemma}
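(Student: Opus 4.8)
The plan is to reduce the $n$-dimensional statement to a one-dimensional observation along rays through the origin, exploiting the star-shapedness of $K$. Write the integral in polar coordinates: for $\theta \in \sfe$, let $\rho(\theta) \ge 0$ be the radial function of $K$, so that $K = \{r\theta : 0 \le r \le \rho(\theta),\ \theta\in\sfe\}$ (this is exactly where star-shapedness is used — the set of radii in $K$ along each direction is an interval $[0,\rho(\theta)]$). Then
\begin{equation}\label{polar-reduction}
\int_K |x|^2\,d\gamma(x) = (2\pi)^{-n/2}\int_{\sfe}\int_0^{\rho(\theta)} r^2\, e^{-r^2/2}\, r^{n-1}\,dr\,d\theta,
\end{equation}
and similarly $\gamma(K) = (2\pi)^{-n/2}\int_{\sfe}\int_0^{\rho(\theta)} e^{-r^2/2} r^{n-1}\,dr\,d\theta$. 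So it suffices to prove the one-dimensional inequality: for every $\rho \ge 0$,
\begin{equation}\label{one-dim-goal}
\int_0^{\rho} r^2 e^{-r^2/2} r^{n-1}\,dr \le n \int_0^{\rho} e^{-r^2/2} r^{n-1}\,dr.
\end{equation}
Once \eqref{one-dim-goal} is established for each $\theta$ (with $\rho = \rho(\theta)$), integrating over $\sfe$ gives \eqref{eq-key-new}.

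To prove \eqref{one-dim-goal}, the natural move is integration by parts on the left side, choosing the antiderivative so that the boundary term has a favorable sign. Observe that $\frac{d}{dr}\big(-r^{n} e^{-r^2/2}\big) = -n r^{n-1} e^{-r^2/2} + r^{n+1} e^{-r^2/2}$, so
\begin{equation}\label{ibp-step}
\int_0^{\rho} r^{n+1} e^{-r^2/2}\,dr = \big[-r^{n} e^{-r^2/2}\big]_0^{\rho} + n\int_0^{\rho} r^{n-1} e^{-r^2/2}\,dr = -\rho^{n} e^{-\rho^2/2} + n\int_0^{\rho} r^{n-1} e^{-r^2/2}\,dr.
\end{equation}
Since $\rho^{n} e^{-\rho^2/2} \ge 0$, the left-hand side is at most $n\int_0^{\rho} r^{n-1} e^{-r^2/2}\,dr$, which is precisely \eqref{one-dim-goal} after recognizing $r^2 e^{-r^2/2} r^{n-1} = r^{n+1} e^{-r^2/2}$. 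This completes the argument.

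I do not expect a serious obstacle here; the proof is short once one commits to polar coordinates. The only point requiring care is the justification of \eqref{polar-reduction} for a general star-shaped body — one should note that a star-shaped body need not be convex, but its radial function $\rho$ is still well-defined and measurable (indeed bounded, since $K$ is bounded), and the change to polar coordinates is valid. It is also worth remarking that no convexity or symmetry of $K$ is used at all; the inequality is a pure consequence of the decay of the Gaussian weight, reflecting the intuition voiced in the introduction. One could alternatively phrase the one-dimensional step as: the function $r \mapsto r^{n-1}e^{-r^2/2}$ has its total mass on $[0,\rho]$ dominating $n^{-1}$ times its second-moment mass, which is exactly \eqref{ibp-step}.
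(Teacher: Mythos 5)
Your proof is correct. The paper proves the lemma by considering $g(s)=\gamma(sK)$, noting $g'(1)\ge 0$ by star-shapedness, computing $g'(1)$ as a boundary integral via Proposition \ref{derivatives}, and converting it with the divergence theorem into $n\gamma(K)-\int_K|x|^2\,d\gamma$. Your polar-coordinates argument is the same identity read off ray by ray: your boundary term $\rho(\theta)^n e^{-\rho(\theta)^2/2}$ is exactly the polar form of the surface integral $\int_{\partial K}\langle n_x,x\rangle e^{-|x|^2/2}\,d\mathcal{H}^{n-1}$, and discarding it because it is nonnegative is the same move as discarding $g'(1)\ge 0$. What your route buys is that it is entirely elementary — no variational formula for $\mu(K_s)'$, no divergence theorem, hence no boundary regularity needed beyond measurability of the radial function — and it makes transparent that only the sign of $(r^{n-1}e^{-r^2/2})'$-type boundary data matters, not convexity or symmetry. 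What the paper's route buys is consistency with the rest of its machinery (the same Proposition \ref{derivatives} is used throughout) and a formulation, $g'(1)\ge 0$, that generalizes immediately to other rotationally noninvariant densities where polar coordinates would be less clean. One small point you handled correctly and should keep: the identification of the radial slice $\{r\ge 0: r\theta\in K\}$ with an interval $[0,\rho(\theta)]$ is precisely where star-shapedness enters, and measurability of $\rho$ suffices for the Fubini/polar step.
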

\begin{proof} Consider the function $g(s)=\gamma(sK)$. Note that $g$ is non-decreasing, since $K$ is star-shaped. Observe that, by Proposition \ref{derivatives},
$$
g'(1)=\frac{1}{(2 \pi)^{\frac{n}{2}}} \int_{\partial K} \langle n_x, x \rangle e^{-\frac{x^2}{2}} d \mathcal{H}^{n-1},
$$
where by $d \mathcal{H}^{n-1}$ we denote the Hausdorff measure on $\partial K.$

Applying the divergence theorem, we therefore get
$$
0 \le g'(1) = \int_{K} {\rm{div}} \Bigl( \frac{1}{(2 \pi)^{\frac{n}{2}}} x e^{-\frac{|x|^2}{2}} \Bigr) dx = n \gamma(K) -  \int_{K} |x|^2 d \gamma.
$$ This inequality implies  (\ref{eq-key-new}).
\end{proof}

\subsection{Proof of Lemma \ref{lemma3}.}
To prove (1)  we use Jensen's inequality (\cite{Rudin}, Theorem 3.3) and convexity of the function $x\rightarrow{1}/{(1+x)}$ for $x>0$. We get
\begin{equation}\label{toref}
\frac{1}{\mu(K)}\int_{K} \frac{1}{\frac{|\nabla V|^2}{(1+\varepsilon)nk_1}+\frac{1}{1-\varepsilon}} \ \,d\mu \geq
 \frac{1}{ \frac{1}{\mu(K)}\int_{K} \frac{|\nabla V|^2}{(1+\varepsilon)nk_1} d \mu+\frac{1}{1-\varepsilon}}.
\end{equation}
Next, we apply (\ref{toref}) and Lemma \ref{k_2} along with the assumption $\Delta V\leq n k_2,$ to infer that
\begin{equation}\label{eq32}
\frac{1}{\mu(K)}\int_K \frac{1}{\frac{|\nabla V|^2}{(1+\varepsilon)nk_1}+\frac{1}{1-\varepsilon}} \,d\mu\geq \frac{1}{\frac{R}{1+\varepsilon}+\frac{1}{1-\varepsilon}},
\end{equation}
where, as before, $R={k_2}/{k_1}$. 
Plugging in the optimal value of
$$\varepsilon=\frac{R+1-2\sqrt{R}}{R-1},$$
we finish the proof of part (1). 

Next, to obtain part (2) of the Lemma, we substitute $\varepsilon =0$ in (\ref{toref}) to arrive at
\begin{equation}\label{benjerk}
\frac{1}{\mu(K)}\int_{K} \frac{1}{\frac{|\nabla V|^2}{nk_1}+1} \ \,d\mu \geq
 \frac{1}{ \frac{1}{\mu(K)}\int_{K} \frac{|\nabla V|^2}{nk_1} d \mu+1}.
\end{equation}
Recalling that $V(x)=|x|^2/2$, part (2) follows from applying Lemma \ref{nowGauss} to the right hand side of (\ref{benjerk}). $\square$

\medskip
Note, that in the case of the standard Gaussian measure the optimal choice is $\varepsilon=0.$

%\medskip
%We conclude with a remark about the optimality (and non-optimality) of various passages in our arguments. Note that Lemma \ref{lemma3} is essentially optimal in the case of the standard Gaussian measure, i.e. when $V=|x|^2/2$. Indeed, a standard argument shows that
%$$\delta(n)=\frac{1}{\sqrt{2\pi}^n}\int_{\R^n} \frac{e^{-\frac{|x|^2}{2}}}{n+|x|^2}\,dx=\frac{1+o(1)}{2n}.$$
%Further, the sequence $\delta(n)$ is decreasing. Indeed, by Fubini's theorem,
%\begin{align*}\delta(n) & =\frac{1}{\sqrt{2\pi}^n}\int_{\R^{n-1}}\int_{-\infty}^{\infty} \frac{e^{-\frac{t^2}{2}}e^{-\frac{|x|^2}{2}}}{n-1+|x|^2+1+t^2}\,dxdt
%\\&
%\leq \frac{1}{\sqrt{2\pi}^n}\int_{\R^{n-1}}\int_{-\infty}^{\infty} \frac{e^{-\frac{t^2}{2}}e^{-\frac{|x|^2}{2}}}{n-1+|x|^2}\,dxdt=\delta(n-1).
%\end{align*}
%Lastly, 
%$$\delta(1)=\frac{1}{\sqrt{2\pi}}\int_{-\infty}^{\infty} \frac{e^{-\frac{s^2}{2}}}{1+s^2} ds=\sqrt{e}\int_1^{\infty} e^{-\frac{s^2}{2}}ds\approx 0.66,$$
%and hence for all $n,$
%$$\delta(n)\in [0.5, 0.67].$$

\section{Concluding remarks}

\subsection{An improved estimate.}

Throughout this subsection sets are assumed to be origin-symmetric and functions are assumed to be even. 

We outline a sharper, more general estimate for the Gardner-Zvavitch constant.

We recall that that $C(K,\mu)$ is called the Poincar{\'e} constant of $\mu|_{K}$ if it is the smallest number $a$ such that for all $C^1-$smooth functions $f$ on $K$, one has
\begin{equation}\label{eq-poin}
\int_{K} f^2 d\mu-\frac{1}{\mu(K)}\left(\int_K f d\mu\right)^2\leq a\int_K |\nabla f|^2 d\mu.
\end{equation}

\begin{theorem}\label{improved}
Let $\mathcal{F}$ be a collection of origin-symmetric convex bodies in $\R^n$ which is closed under Minkowski convex combinations.   Let
$$C=C(\mu, \mathcal{F})=\sup_{\varepsilon \in [0,1)} (1 - \varepsilon)  \inf_{K\in\mathcal{F}} \left[1-\frac{1}{n\mu(K)}\int_K \langle A^{-1}\nabla V, \nabla V \rangle \,d\mu\right],$$
where 
 $$A=   \nabla^2 V+\frac{1}{n}\nabla V\otimes\nabla V+ \frac{\varepsilon}{(1-\varepsilon)C(K,\mu)} {\rm Id}$$
 and $C(K,\mu)$ is the Poincar{\'e} constant of $\mu|_{K}$.
  
Then, for all $K, L\in \mathcal{F},$ and for every $\lambda\in [0,1]$
$$\mu(\lambda K+(1-\lambda)L)^{\frac{C}{n}}\geq \lambda \mu(K)^{\frac{C}{n}}+(1-\lambda)\mu(L)^{\frac{C}{n}}.$$

In particular, 
$$
C \ge \inf_{K\in\mathcal{F}}  \left[1-\frac{1}{n\mu(K)}\int_K \langle (\nabla^2 V+\frac{1}{n}\nabla V\otimes \nabla V)^{-1}\nabla V, \nabla V \rangle \,d\mu\right].
$$

\end{theorem}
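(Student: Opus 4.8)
The plan is to adapt the argument of Lemma \ref{lemma1} and Lemma \ref{lemma2}, but rather than discarding information by the crude bound $\nabla^2 V \geq k_1\,{\rm Id}$, keep the exact Hessian and trade between the Bochner term and the Brascamp--Lieb/Poincar\'e term using a free parameter $\varepsilon$. By Corollary \ref{equiv-const}, it suffices to show that for every $K\in\mathcal{F}$ and every admissible $f$ on $\partial K$ the infinitesimal inequality \eqref{infin-1} holds with the stated $C$; and by the computation in the proof of Lemma \ref{lemma1}, this reduces (after solving $Lu=1$ with $u_\nu=f$ and normalizing $\int_{\partial K} f\,d\mu_{\partial K}=\mu(K)$, which is legitimate since in the symmetric even setting $\int_{\partial K} f \, d\mu_{\partial K} = 0$ gives \eqref{infin-1} for free) to proving the interior estimate
$$
\frac{1}{\mu(K)}\int_K \left(\|\nabla^2 u\|^2 + \langle \nabla^2 V \nabla u, \nabla u\rangle\right)d\mu \;\geq\; \frac{C}{n}.
$$

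First I would bound $\|\nabla^2 u\|^2 \geq \frac{1}{n}|\Delta u|^2$ by Cauchy--Schwarz as in \eqref{hessian}, and separately apply the even Brascamp--Lieb inequality: since $u$ is even, $u_{x_i}$ is odd, and $\int_K u_{x_i}^2 \, d\mu \leq \int_K \langle (\nabla^2 V)^{-1}\nabla u_{x_i}, \nabla u_{x_i}\rangle \, d\mu$, but now I would \emph{also} use the Poincar\'e inequality \eqref{eq-poin} on $K$ applied to $u_{x_i}$ (again odd, so the mean term only helps), giving $\int_K u_{x_i}^2 \, d\mu \leq C(K,\mu)\int_K |\nabla u_{x_i}|^2 \, d\mu$; combining and summing over $i$ yields $\int_K \|\nabla^2 u\|^2 \, d\mu \geq \frac{1}{C(K,\mu)}\int_K |\nabla u|^2 \, d\mu$, which is the improvement over \eqref{CFM+hess} that does not throw away the Hessian. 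Then, for $\varepsilon\in[0,1)$, I take $(1-\varepsilon)$ times \eqref{hessian}, plus $\varepsilon$ times this Poincar\'e-improved bound, plus the \emph{full} term $\int_K \langle \nabla^2 V\nabla u, \nabla u\rangle\,d\mu$, to get
$$
\int_K \left(\|\nabla^2 u\|^2 + \langle \nabla^2 V\nabla u, \nabla u\rangle\right)d\mu \;\geq\; \int_K\left(\tfrac{1-\varepsilon}{n}|\Delta u|^2 + \tfrac{\varepsilon}{C(K,\mu)}|\nabla u|^2 + \langle \nabla^2 V\nabla u, \nabla u\rangle\right)d\mu.
$$

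Next I would substitute $\Delta u = 1_K + \langle \nabla V, \nabla u\rangle$ exactly as in \eqref{eq3}, so that the right-hand side becomes $\int_K\big[\frac{1-\varepsilon}{n} + 2\langle \nabla u, \frac{1-\varepsilon}{n}\nabla V\rangle + \langle A_\varepsilon \nabla u, \nabla u\rangle\big]d\mu$ where now $A_\varepsilon = \nabla^2 V + \frac{1-\varepsilon}{n}\nabla V\otimes\nabla V + \frac{\varepsilon}{C(K,\mu)}{\rm Id}$ (positive definite since $\nabla^2 V \geq 0$ and the $\varepsilon$-term is positive). Completing the square via \eqref{matr} with $A = A_\varepsilon$, $x = \nabla u$, $y = -\frac{1-\varepsilon}{n}\nabla V$ gives the lower bound $\int_K \frac{1-\varepsilon}{n}\big(1 - \frac{1-\varepsilon}{n}\langle A_\varepsilon^{-1}\nabla V, \nabla V\rangle\big)d\mu$. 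Dividing by $\mu(K)$ and comparing with $\frac{C}{n}$, one sees that $C = (1-\varepsilon)\big[1 - \frac{1}{n\mu(K)}\int_K \langle \frac{1-\varepsilon}{n} A_\varepsilon^{-1}\nabla V,\nabla V\rangle \, d\mu\big]$ works; rescaling $A_\varepsilon$ by $\frac{1}{1-\varepsilon}$ to write $A = \nabla^2 V + \frac{1}{n}\nabla V\otimes\nabla V + \frac{\varepsilon}{(1-\varepsilon)C(K,\mu)}{\rm Id}$ absorbs the factor and gives exactly the stated formula for $C(\mu,\mathcal{F})$. Taking the supremum over $\varepsilon$ and the infimum over $K$ (and $\varepsilon=0$ for the final displayed ``in particular'') completes the proof. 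The main obstacle I anticipate is bookkeeping around the Poincar\'e step: one must make sure the Poincar\'e constant $C(K,\mu)$ is finite and that the even/odd parity of $u$ and its derivatives is correctly tracked so that the mean-subtraction terms always point the right way; the algebra of rescaling $A_\varepsilon \to A$ must also be done carefully so the $(1-\varepsilon)$ factors land precisely as written.
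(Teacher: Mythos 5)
Your proposal is correct and follows essentially the same route as the paper's proof: reduce via Lemma \ref{lemma1} to lower-bounding $\frac{1}{\mu(K)}\int_K(\|\nabla^2 u\|^2+\langle\nabla^2V\nabla u,\nabla u\rangle)\,d\mu$ for $Lu=1_K$, use (\ref{hessian}) together with $\Delta u=1+\langle\nabla V,\nabla u\rangle$, apply the Poincar\'e inequality (\ref{eq-poin}) to the odd partials $u_{x_i}$ to get $\int_K|\nabla u|^2\,d\mu\le C(K,\mu)\int_K\|\nabla^2u\|^2\,d\mu$, interpolate with the parameter $\varepsilon$, and complete the square via (\ref{matr}). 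The only point to tighten is the final ``rescaling'': because you keep the full term $\int_K\langle\nabla^2V\nabla u,\nabla u\rangle\,d\mu$ outside the $\varepsilon$-split (whereas the paper multiplies the entire Bochner lower bound by $1-\varepsilon$), your matrix $A_\varepsilon=\nabla^2V+\tfrac{1-\varepsilon}{n}\nabla V\otimes\nabla V+\tfrac{\varepsilon}{C(K,\mu)}\,{\rm Id}$ is not equal to $(1-\varepsilon)A$ but only satisfies $A_\varepsilon\succeq(1-\varepsilon)A$ (using $\nabla^2V\succeq0$), hence $(1-\varepsilon)\langle A_\varepsilon^{-1}\nabla V,\nabla V\rangle\le\langle A^{-1}\nabla V,\nabla V\rangle$; this inequality goes the right way, so your bound is in fact marginally stronger than the stated one and the theorem follows. (Also, the mention of Brascamp--Lieb in the Poincar\'e step is superfluous, and there is a stray extra factor of $\tfrac1n$ in your displayed formula for $C$ before the rescaling.)
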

\begin{proof} Consider an even $C^2$ function $u:K\rightarrow\R$ such that $Lu=1_K.$ Then, by (\ref{hessian}), along with the fact that $\Delta u=1+\langle \nabla V, \nabla u\rangle$,
\begin{align*}
\int_K ||\nabla^2 u||^2 & +\langle \nabla^2 V\nabla u,\nabla u\rangle \,d\mu\geq
\int_K \frac{1}{n} |1+\langle \nabla V, \nabla u\rangle|^2+\langle \nabla^2 V\nabla u,\nabla u\rangle \,d\mu
\\& = \int_K \frac{1}{n} +\frac{2}{n}\langle \nabla V, \nabla u\rangle+\langle (\nabla^2 V+\frac{1}{n}\nabla V\otimes\nabla V)\nabla u,\nabla u\rangle \,d\mu.
\end{align*}
Next we apply the Poincar\`e inequality (\ref{eq-poin}) to every $u_{x_i}$ (here we use that $u$ and $\it{V}$ are even, hence $\int u_{x_i} d\mu=0$) :
$$
\int_{K} u^2_{x_i} d \mu  \le C(K,\mu) \int_{K} |\nabla u_{x_i}|^2 d \mu.
$$
Thus 
$$
\int_{K} |\nabla u|^2 d \mu  \le  C(K,\mu) \int_{K} \| \nabla^2 u \|^2 d \mu,
$$
and for every $\varepsilon \in [0,1]$ one has
\begin{align*}
\int_K ||\nabla^2 u||^2 & +\langle \nabla^2 V\nabla u,\nabla u\rangle \,d\mu\geq
\frac{\varepsilon}{C(K,\mu)} \int_{K} |\nabla u|^2 d \mu  \\&
+ (1-\varepsilon) \int_K \frac{1}{n} +2\frac{\langle \nabla V, \nabla u\rangle}{n}+\langle (\nabla^2 V+\frac{1}{n}\nabla V\otimes\nabla V)\nabla u,\nabla u\rangle \,d\mu
\\& =  (1-\varepsilon) \Bigl( \int_K \frac{1}{n} +2\frac{\langle \nabla V, \nabla u\rangle}{n} +
\langle  A \nabla u,\nabla u\rangle \,d\mu\Bigr).
\end{align*}
Applying (\ref{matr}) with the positive-definite matrix $A$, and Lemma \ref{lemma1}, we complete the proof.
\end{proof}

Theorem \ref{main} follows directly from Theorem \ref{improved}. Perhaps, $C(\mu,\mathcal{F})$ could be estimated for the class of symmetric convex sets under less restrictive assumptions than $\nabla^2 V\geq k_1 {\rm Id}$ and $\Delta V\leq n.$ 

\subsection{The case of non-constant $F$, and the Gardner-Zvavitch conjecture for dilates.}

\medskip

In this subsection we show that the choice of a constant $F$ in the equation $Lu=F$ is not always optimal.
We give an example showing that a result could be obtained with a non-constant $F$. 

\begin{definition}\label{C_2}
For a $C^2-$smooth \emph{even} function $F:K\rightarrow \R$, with $\int_K F\,d\mu\neq 0,$ let $C_F$ be the largest number, such that for every $u\in C^2(K)$ with $Lu=F,$ 
\begin{equation}\label{maineq-end-def}
\int_K ||\nabla^2 u||^2+\langle \nabla^2V \nabla u, \nabla u\rangle \,d\mu \geq \int_K F^2\,d\mu-\frac{n-C_F}{n\mu(K)}\left(\int_K F\,d\mu\right)^2.
\end{equation}
We define 
$$C_2(\mu)=\sup_{F} C_F,$$
where the supremum runs over all $C^2-$smooth \emph{even} functions $F:K\rightarrow \R$, with $\int_K F\,d\mu\neq 0.$
\end{definition}

We observe the following straightforward 

\begin{Claim}\label{claim}
$C_2(\mu)\geq C_1(\mu,\mathcal{F}_{sym}).$
\end{Claim}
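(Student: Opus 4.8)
The plan is to show that any admissible choice in the definition of $C_1(\mu,\mathcal{F}_{sym})$ is a special case of an admissible choice in the definition of $C_2(\mu)$, namely the one corresponding to $F=1_K$. First I would recall the two definitions side by side. In Definition \ref{C_1}, $C_1(\mu,\mathcal{F}_{sym})$ is the largest constant such that for every symmetric convex $K$ and every $u\in C^2(K)$ with $Lu=1_K$, one has
$$
\frac{1}{\mu(K)}\int_K \|\nabla^2 u\|^2+\langle \nabla^2 V\nabla u,\nabla u\rangle\,d\mu\geq \frac{C_1(\mu,\mathcal{F}_{sym})}{n}.
$$
In Definition \ref{C_2}, taking the particular even function $F=1_K$ (which is even because $K$ is symmetric, and satisfies $\int_K F\,d\mu=\mu(K)\neq 0$), the constant $C_{1_K}$ is the largest number such that for every $u\in C^2(K)$ with $Lu=1_K$,
$$
\int_K \|\nabla^2 u\|^2+\langle \nabla^2 V\nabla u,\nabla u\rangle\,d\mu\geq \int_K 1_K^2\,d\mu-\frac{n-C_{1_K}}{n\mu(K)}\Bigl(\int_K 1_K\,d\mu\Bigr)^2.
$$

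The key step is to simplify the right-hand side of this last inequality: since $\int_K 1_K^2\,d\mu=\mu(K)$ and $\bigl(\int_K 1_K\,d\mu\bigr)^2=\mu(K)^2$, it becomes
$$
\mu(K)-\frac{n-C_{1_K}}{n\mu(K)}\mu(K)^2=\mu(K)-\frac{n-C_{1_K}}{n}\mu(K)=\frac{C_{1_K}}{n}\mu(K).
$$
Hence the defining inequality for $C_{1_K}$ reads exactly
$$
\frac{1}{\mu(K)}\int_K \|\nabla^2 u\|^2+\langle \nabla^2 V\nabla u,\nabla u\rangle\,d\mu\geq \frac{C_{1_K}}{n},
$$
which is word-for-word the inequality defining $C_1(\mu,\mathcal{F}_{sym})$. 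Since both are defined as the largest constant making the identical inequality hold over the identical class of functions $u$ (those with $Lu=1_K$) and bodies $K$, we get $C_{1_K}=C_1(\mu,\mathcal{F}_{sym})$; I should be slightly careful that Definition \ref{C_2} fixes $K$ while Definition \ref{C_1} quantifies over $K\in\mathcal{F}_{sym}$, but taking the infimum over such $K$ on both sides preserves the equality. Finally, since $C_2(\mu)=\sup_F C_F\geq C_{1_K}$ by definition of the supremum, we conclude $C_2(\mu)\geq C_1(\mu,\mathcal{F}_{sym})$.

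I do not anticipate a genuine obstacle here — the claim is essentially a bookkeeping identity — but the one point that deserves care is the matching of quantifiers over $K$ between the two definitions (Definition \ref{C_2} appears to work with a fixed body $K$ in its displayed inequality, whereas $C_1$ ranges over all symmetric $K$), so I would state explicitly that the comparison is made body-by-body and then pass to the infimum. Everything else is the algebraic collapse of the $F=1_K$ term shown above.
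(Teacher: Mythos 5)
Your proof is correct and is exactly the intended argument: the paper states this Claim as ``straightforward'' without proof, and the content is precisely the algebraic collapse of (\ref{maineq-end-def}) at $F=1_K$ to the inequality of Definition \ref{C_1}, followed by $C_2(\mu)=\sup_F C_F\geq C_{1_K}$. Your remark about matching the quantification over $K$ body-by-body before passing to the infimum is a sensible reading of the paper's (slightly informal) Definition \ref{C_2}.
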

%\begin{proof}
%Note that by definition, $C_1(\mu)=C_{1_K}$. As $C_2(\mu)$ is the supremum over all possible choices of $F$, we observe the validity of the claim.
%\end{proof}

Note that the proof of Lemma \ref{lemma1} implies, in fact, a stronger statement:

\begin{lemma}\label{claim2}
$C_0(\mu, \mathcal{F}_{sym})\geq C_2(\mu).$
\end{lemma}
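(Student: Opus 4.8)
The plan is to retrace the proof of Lemma \ref{lemma1}, but keep the right-hand side $F$ general instead of setting $F=1$. Recall that by Corollary \ref{equiv-const}, to conclude $C_0(\mu,\mathcal{F}_{sym})\geq C$ it suffices to verify the infinitesimal inequality \eqref{infin-1} for every symmetric convex body $K\in\mathcal{F}_{sym}$ and every $f\in C_{\mathcal{F}_{sym}}(K)$. So first I would fix such a $K$ and $f$. If $\int_{\partial K} f\,d\mu_{\partial K}=0$, then \eqref{infin-1} holds for any constant as a consequence of log-concavity (as noted in the proof of Lemma \ref{lemma1}), so we may assume, after renormalizing, that $\int_{\partial K}f\,d\mu_{\partial K}=\mu(K)\int_K F\,d\mu$ for whichever even $F$ we choose; more precisely, given an even $C^2$ function $F$ on $K$ with $\int_K F\,d\mu\neq 0$, rescale $f$ so that $\int_{\partial K}f\,d\mu_{\partial K}=\int_K F\,d\mu$.

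Next I would solve the Neumann problem $Lu=F$ in $K$ with $\langle \nabla u, n_x\rangle=f$ on $\partial K$; this is well-posed precisely because $\int_K F\,d\mu = \int_{\partial K} f\,d\mu_{\partial K}$ by the integration-by-parts identity, exactly as in the proof of Lemma \ref{lemma1} (the same reference to subsection 2.4 of \cite{KM2} applies). Since $K$ is symmetric and $V$ and $F$ are even, the solution $u$ is even. Now apply the Bochner-Reilly identity \eqref{railey}: the left side is $\int_K (Lu)^2\,d\mu=\int_K F^2\,d\mu$, and by the definition of $C_2(\mu)=\sup_F C_F$ (Definition \ref{C_2}), for any $\delta>0$ we may pick $F$ so that $C_F>C_2(\mu)-\delta$, and then
\[
\int_K \|\nabla^2 u\|^2+\langle\nabla^2 V\nabla u,\nabla u\rangle\,d\mu \geq \int_K F^2\,d\mu - \frac{n-C_F}{n\mu(K)}\Bigl(\int_K F\,d\mu\Bigr)^2.
\]
Plugging this into \eqref{railey} and cancelling $\int_K F^2\,d\mu$ yields
\[
\frac{n-C_F}{n\mu(K)}\Bigl(\int_K F\,d\mu\Bigr)^2 \geq \int_{\partial K}\bigl(H_x f^2 - 2\langle\nabla_{\partial K}u,\nabla_{\partial K}f\rangle + \langle \mathrm{II}\,\nabla_{\partial K}u,\nabla_{\partial K}u\rangle\bigr)\,d\mu_{\partial K}.
\]

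Finally I would apply the elementary matrix inequality \eqref{matr} with $A=\mathrm{II}$, $x=\nabla_{\partial K}u$, $y=\nabla_{\partial K}f$, which gives $-2\langle\nabla_{\partial K}u,\nabla_{\partial K}f\rangle + \langle\mathrm{II}\,\nabla_{\partial K}u,\nabla_{\partial K}u\rangle \geq -\langle\mathrm{II}^{-1}\nabla_{\partial K}f,\nabla_{\partial K}f\rangle$, so that the boundary integral on the right dominates $\int_{\partial K}(H_x f^2 - \langle\mathrm{II}^{-1}\nabla_{\partial K}f,\nabla_{\partial K}f\rangle)\,d\mu_{\partial K}$. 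Recalling $\bigl(\int_K F\,d\mu\bigr)^2 = \bigl(\int_{\partial K}f\,d\mu_{\partial K}\bigr)^2$ by our normalization, this is exactly \eqref{infin-1} with $C=C_F$. Letting $\delta\to 0$ and invoking Corollary \ref{equiv-const} gives $C_0(\mu,\mathcal{F}_{sym})\geq C_2(\mu)$. The only genuinely new point relative to Lemma \ref{lemma1} is keeping track of the extra term $\int_K F^2\,d\mu$ on both sides and verifying that the Neumann compatibility condition still reads $\int_{\partial K}f\,d\mu_{\partial K}=\int_K F\,d\mu$; these are routine, so I do not expect a serious obstacle — the main thing to be careful about is that $F$ in Definition \ref{C_2} is required to be even, which is why the argument is confined to $\mathcal{F}_{sym}$ and why evenness of $u$ (hence $\int_K u_{x_i}\,d\mu=0$, implicitly used to make $C_F$ meaningful) is available.
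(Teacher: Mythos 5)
Your proposal is correct and is precisely the argument the paper intends: the paper gives no separate proof of this lemma, stating only that ``the proof of Lemma \ref{lemma1} implies, in fact, a stronger statement,'' and your write-up supplies exactly that generalization (the Neumann problem $Lu=F$ with even right-hand side, Proposition \ref{raileyprop}, Definition \ref{C_2} in place of Definition \ref{C_1}, and the matrix inequality \eqref{matr}). The only blemish is the stray factor $\mu(K)$ in your first statement of the normalization, which you immediately correct to the right compatibility condition $\int_{\partial K}f\,d\mu_{\partial K}=\int_K F\,d\mu$; since \eqref{infin-1} is homogeneous of degree two in $f$, this normalization is harmless.
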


It is possible that in the case of the standard Gaussian measure, the only sub-optimal place in our argument is the application of Lemma \ref{lemma1} in place of the stronger statement of Lemma \ref{claim2}: indeed, solving the Neumann system with $F\neq 1_K$ could lead to a better bound, however our current proof of Lemma \ref{lemma2} does not allow us to use this freedom.

Finally, we outline the following result.

\begin{lemma}\label{cl2}
Let $K$ be a convex body with $\int_K x d\gamma(x)=0$, let $\gamma$ be the Gaussian measure and let 
$$V(x)=u(x)=\frac{|x|^2}{2}$$ 
on $K$. Let 
$$F=Lu=n-|x|^2$$ 
on $K.$ Then
\begin{equation}\label{maineq-end}
\int_K ||\nabla^2 u||^2+\langle \nabla^2V \nabla u, \nabla u\rangle \,d\gamma \geq \int_K F^2\,d\gamma-\frac{n-1}{n\gamma(K)}\left(\int_K F\,d\gamma\right)^2.
\end{equation}
\end{lemma}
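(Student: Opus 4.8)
The plan is to verify the inequality (\ref{maineq-end}) directly by computation, since everything is explicit: with $V(x)=u(x)=|x|^2/2$ we have $\nabla u = x$, $\nabla^2 u = {\rm Id}$, $\nabla^2 V = {\rm Id}$, and $F = Lu = \Delta u - \langle \nabla u,\nabla V\rangle = n - |x|^2$. First I would substitute these into the left-hand side: $\|\nabla^2 u\|^2 = n$ and $\langle \nabla^2 V\nabla u,\nabla u\rangle = |x|^2$, so the left-hand side equals $\int_K (n + |x|^2)\,d\gamma = n\gamma(K) + \int_K |x|^2\,d\gamma$.

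Next I would expand the right-hand side. We have $\int_K F^2\,d\gamma = \int_K (n-|x|^2)^2\,d\gamma = n^2\gamma(K) - 2n\int_K|x|^2\,d\gamma + \int_K|x|^4\,d\gamma$, and $\int_K F\,d\gamma = n\gamma(K) - \int_K|x|^2\,d\gamma$. Writing $m_0 = \gamma(K)$, $m_2 = \int_K|x|^2\,d\gamma$, $m_4 = \int_K|x|^4\,d\gamma$, the desired inequality becomes
$$
n m_0 + m_2 \;\geq\; n^2 m_0 - 2n m_2 + m_4 - \frac{n-1}{n m_0}\,(n m_0 - m_2)^2.
$$
Expanding the last term, $(n m_0 - m_2)^2 = n^2 m_0^2 - 2n m_0 m_2 + m_2^2$, so $\frac{n-1}{n m_0}(n m_0 - m_2)^2 = (n-1)\big(n m_0 - 2 m_2 + \frac{m_2^2}{n m_0}\big)$. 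After collecting terms, the inequality reduces to a clean relation among $m_0, m_2, m_4$; I expect it to take the form
$$
m_4 \;\leq\; (n+2) m_2 \;-\; \frac{n-1}{n}\cdot\frac{m_2^2}{m_0} \;+\;(\text{lower-order terms that cancel}),
$$
and the main task is to identify the correct combination and then prove it.

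The key tool for controlling $m_4$ will be an integration-by-parts / divergence-theorem argument analogous to Lemma \ref{nowGauss}, applied now to a weight involving $|x|^4$ or $|x|^2 x$. Concretely, one computes ${\rm div}\big(|x|^2 x\, e^{-|x|^2/2}\big) = (n+2)|x|^2 e^{-|x|^2/2} - |x|^4 e^{-|x|^2/2}$, so by the divergence theorem $\int_K (n+2)|x|^2\,d\gamma - \int_K|x|^4\,d\gamma = \frac{1}{(2\pi)^{n/2}}\int_{\partial K}\langle n_x, x\rangle |x|^2 e^{-|x|^2/2}\,d\mathcal{H}^{n-1}$, and since $\int_K x\,d\gamma = 0$ we also have boundary information from $\int_K {\rm div}(x e^{-|x|^2/2})\cdot(\text{something}) $. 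The hypothesis $\int_K x\,d\gamma = 0$ is what replaces star-shapedness here: it forces $g'(1) = 0$ in the family $g(s) = \gamma(sK)$-type argument applied coordinatewise, or more precisely it gives $\int_{\partial K}\langle n_x,x\rangle e^{-|x|^2/2}\,d\mathcal{H}^{n-1}$-type vanishing after contracting against the identity.

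The main obstacle I anticipate is handling the boundary term $\int_{\partial K}\langle n_x, x\rangle |x|^2\,d\mu_{\partial K}$, which is not obviously signed and not obviously zero — unlike in Lemma \ref{nowGauss} where star-shapedness gave $\langle n_x,x\rangle \geq 0$. The resolution should come from recognizing that (\ref{maineq-end}) is precisely the infinitesimal (second-derivative) inequality for $\gamma(sK)$ at $s=1$ dressed up via Proposition \ref{derivatives} and Proposition \ref{raileyprop} with $u(x)=|x|^2/2$: indeed with this choice $u_{n_x} = \langle x, n_x\rangle$ is exactly the function $f$ appearing in Proposition \ref{derivatives}, so Reilly's identity (\ref{railey}) converts the left-hand side of (\ref{maineq-end}) minus $\int_K F^2\,d\gamma$ into exactly the boundary integral $-\int_{\partial K}(H_x f^2 - 2\langle \nabla_{\partial K} u,\nabla_{\partial K} f\rangle + \langle {\rm II}\nabla_{\partial K} u,\nabla_{\partial K} u\rangle)\,d\mu_{\partial K}$, which by the matrix inequality (\ref{matr}) is bounded below by $-\int_{\partial K}(H_x f^2 - \langle {\rm II}^{-1}\nabla_{\partial K} f,\nabla_{\partial K} f\rangle)\,d\mu_{\partial K} = -\mu(K_s)''|_{s=1}$. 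One then needs $\mu(K_s)''|_{s=1} \leq \frac{n-1}{n\mu(K)}(\mu(K_s)'|_{s=1})^2$, i.e. the dimensional Brunn--Minkowski inequality for $\gamma$ along the dilation family, which follows because $\gamma(sK) = \gamma(sK + 0\cdot\{0\})$ and Theorem \ref{corgauss} (or directly Gardner--Zvavitch for dilates, recovered in this subsection) gives $\frac{1}{n}$-concavity of $s\mapsto \gamma(sK)$ on star-shaped-type families; combined with $\int_K x\,d\gamma=0$ forcing $\mu(K_s)'|_{s=1}$ to relate cleanly to $n\gamma(K)-m_2$, this closes the argument. I would present the Reilly-identity route as the main line, since it avoids the delicate unsigned boundary term and ties directly into the machinery already built.
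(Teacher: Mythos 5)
Your setup is the same as the paper's: substitute $\nabla^2u=\nabla^2V={\rm Id}$, $\nabla u=x$, $F=n-|x|^2$, and reduce \eqref{maineq-end} to a relation among $m_0=\gamma(K)$, $m_2=\int_K|x|^2d\gamma$, $m_4=\int_K|x|^4d\gamma$. But you stop before finishing the algebra, and the two routes you propose for closing the argument do not work. Carrying out the expansion, \eqref{maineq-end} is equivalent to
$$
\Bigl[m_4-\frac{m_2^2}{m_0}-2m_2\Bigr]+\Bigl[-m_2+\frac{m_2^2}{nm_0}\Bigr]\le 0,
$$
and the paper closes this with two black boxes: Lemma 2 of \cite{CFM} (inequality \eqref{CFMeq}, the key variance estimate behind the B-theorem), which kills the first bracket, and Lemma \ref{k_2} (i.e.\ $m_2\le nm_0$, using the barycenter hypothesis $\int_Kx\,d\gamma=0$), which kills the second. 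Your divergence-theorem idea with ${\rm div}(|x|^2xe^{-|x|^2/2})$ only yields $m_4\le(n+2)m_2$ (and only if the boundary term $\int_{\partial K}\langle n_x,x\rangle|x|^2\,d\mu_{\partial K}$ is nonnegative, which requires star-shapedness about the origin --- not assumed here). This is quantitatively too weak: since $m_2\le nm_0$, the bound actually required, $m_4\le 3m_2+\frac{(n-1)m_2^2}{nm_0}$, is \emph{stronger} than $m_4\le(n+2)m_2$, so no first-order integration by parts suffices; one genuinely needs the second-order (Poincar\'e-type) input \eqref{CFMeq}.

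Your proposed ``main line'' via the Reilly identity is worse: it is circular and runs an inequality in the wrong direction. Proposition \ref{raileyprop} plus the matrix inequality \eqref{matr} converts the left-hand side of \eqref{maineq-end} into an \emph{upper} bound by $\int_KF^2d\gamma-\mu(K_s)''$, whereas you need a \emph{lower} bound; the machinery of Lemma \ref{lemma1}/Lemma \ref{claim2} goes from interior estimates like \eqref{maineq-end} \emph{to} the infinitesimal Brunn--Minkowski inequality, not the reverse. Moreover, the $\frac{1}{n}$-concavity of $s\mapsto\gamma(sK)$ that you invoke to finish is exactly Proposition \ref{dilates}, which the paper \emph{deduces from} this lemma; and Theorem \ref{corgauss} only gives exponent $\frac{1}{2n}$, which corresponds to the constant $\frac{n-1/2}{n}$ rather than the $\frac{n-1}{n}$ appearing in \eqref{maineq-end}. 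So the missing ingredient is the Cordero-Erausquin--Fradelizi--Maurey moment inequality \eqref{CFMeq}; without it (or an equivalent), the proof does not close.
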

\begin{proof}
% Below the notation $\int \varphi(x)$ stands for $\frac{1}{\gamma(K)}\int_K \varphi(x) \,d\gamma(x)$

For all $x\in K,$
$$\frac{1}{4} ||\nabla^2 {|x|^2} ||^2=n;\,\, \frac{1}{4} \Bigl|\nabla {|x|^2}\Bigr|^2=|x|^2.$$
%$$||\nabla^2 \frac{x^2}{2}||^2=n;$$
%$$|\nabla \frac{x^2}{2}|^2=x^2.$$
Hence, (\ref{maineq-end}) becomes
\begin{align}\label{1}
n \gamma(K)& +\int_K |x|^2 \,d\gamma\geq n^2 \gamma(K) -2n\int_{K} |x|^2\,d\gamma+\int_{K} |x|^4 \,d\gamma
\\&
\nonumber -\left(n^2 \gamma(K) -2n\int_K |x|^2 \,d\gamma+ \frac{1}{\gamma(K)} \Bigl(\int_K |x|^2\,d\gamma\Bigr)^2\right)
\\&  \nonumber+\frac{1}{n}\left(n^2 \gamma(K) -2n\int_K |x|^2 \,d\gamma+ \frac{1}{\gamma(K)} \Bigl(\int_K |x|^2\,d\gamma\Bigr)^2\right).
\end{align}
Rearranging this inequality, we obtain
$$
\left[\int_{K} |x|^4 \,d\gamma - \frac{1}{\gamma(K)} \Bigl(\int_{K} |x|^2\,d\gamma \Bigr)^2-2\int_{K} |x|^2 \,d\gamma\right]+$$
\begin{equation}\label{2}
\left[-\int_{K} |x|^2 \,d\gamma+\frac{1}{n\gamma(K)} \Bigl(\int_{K} |x|^2\,d\gamma \Bigr)^2\right]\leq 0.
\end{equation}
Recall Lemma 2 from \cite{CFM} (which was a key tool in obtaining the B-theorem):
\begin{equation}\label{CFMeq}
\int_{K} |x|^4 \,d\gamma- \frac{1}{\gamma(K)} \Bigl(\int_{K} |x|^2\,d\gamma\Bigr)^2-2\int_{K} |x|^2 \,d\gamma\leq 0.
\end{equation}
In addition, Lemma \ref{k_2} implies that
\begin{equation}\label{xsq}
-\gamma(K)+\frac{1}{n}\int_K |x|^2\,d\gamma\leq 0.
\end{equation}
Applying (\ref{CFMeq}) and (\ref{xsq}) we obtain the validity of (\ref{2}), which in turn implies the validity of (\ref{maineq-end}).
\end{proof}

As a consequence of Lemma \ref{claim2} and Lemma \ref{cl2}, we confirm the conjecture of Gardner and Zvavitch in the case when $K$ and $L$ are dilates. This result was previously obtained by Gardner and Zvavitch \cite{GZ}, where the authors also used (\ref{CFMeq}). We include the following proposition merely for completeness.

\begin{proposition}\label{dilates}
Let $K$ be a convex set such that $\int_K x d\gamma(x)=0$. Let $L=aK$ for some $a>0$. Then for every $\lambda\in [0,1],$
$$\gamma(\lambda K+(1-\lambda)L)^{\frac{1}{n}}\geq \lambda \gamma(K)^{\frac{1}{n}}+(1-\lambda)\gamma(L)^{\frac{1}{n}}.$$
\end{proposition}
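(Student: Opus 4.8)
\textbf{Proof proposal for Proposition \ref{dilates}.}

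The plan is to reduce the statement about dilates to the one-parameter variational framework already assembled in Lemma \ref{key_lemma_BM} and its corollaries, and then to feed it the case $F = Lu = n - |x|^2$ handled in Lemma \ref{cl2}. First I would note that since $L = aK$, every Minkowski convex combination $\lambda K + (1-\lambda)L$ is a dilate $\rho K$ with $\rho = \lambda + (1-\lambda)a > 0$; so the family $\{\rho K : \rho > 0\}$ is a one-parameter family closed under Minkowski convex combinations, and the desired inequality is exactly the $\tfrac{1}{n}$-concavity of $\gamma$ along this family. By Lemma \ref{key_lemma_BM} (with $c = 1$), this is equivalent to the second-order inequality $\mu(K_s)''|_{s=0} \cdot \mu(K_0) \le \tfrac{n-1}{n}\big(\mu(K_s)'|_{s=0}\big)^2$ for the family obtained from $h_{K_s} = h_K + s\psi$ with $\psi = (a-1)h_K$, i.e.\ $K_s = (1 + s(a-1))K$ near $s = 0$.

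Rather than compute these derivatives by hand, I would route through the stronger Lemma \ref{claim2}, which asserts $C_0(\gamma, \mathcal{F}_{sym}) \ge C_2(\gamma)$, and more precisely through the proof of Lemma \ref{lemma1}, which shows (as the text emphasizes immediately after stating Lemma \ref{claim2}) that the infinitesimal inequality (\ref{infin-1}) holds with $C = 1$ whenever one can solve $Lu = F$ with the appropriate Neumann data and the bound (\ref{maineq-end-def}) holds with $C_F = 1$. For the specific perturbation corresponding to dilates, the relevant boundary function is $f(x) = \psi(n_x) = (a-1)h_K(n_x) = (a-1)\langle x, n_x\rangle$ on $\partial K$, which (after normalization) is precisely the Neumann datum produced by $u(x) = |x|^2/2$: indeed $\langle \nabla u, n_x\rangle = \langle x, n_x\rangle$, and $Lu = \Delta u - \langle \nabla u, x\rangle = n - |x|^2 = F$. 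Thus the dilation direction is exactly the one for which the Neumann problem $Lu = F$ is solved by the quadratic $u = |x|^2/2$ with $F = n - |x|^2$, and Lemma \ref{cl2} gives the needed inequality (\ref{maineq-end}) with constant $n - 1$, i.e.\ $C_F = 1$.

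So the structure is: (i) observe that the dilation family is generated by $\psi = (a-1)h_K$ and that the corresponding boundary data on $\partial K$ is $f(x) = (a-1)\langle x, n_x\rangle$; (ii) note $\int_{\partial K} f\, d\mu_{\partial K} \ne 0$ (it equals $(a-1)$ times $g'(1)$-type quantity, nonzero unless $a = 1$, in which case the statement is trivial), so after renormalization we may solve $Lu = F$ with $u = |x|^2/2$ and $F = n - |x|^2$, noting the hypothesis $\int_K x\, d\gamma = 0$ guarantees $\int_\partial K f\, d\mu_{\partial K}$ is proportional to $\int_K F\, d\gamma \ne 0$ generically and is exactly the normalization used in Lemma \ref{cl2}; (iii) apply the argument of Lemma \ref{lemma1} — the Bochner identity (\ref{railey}) combined with the elementary matrix inequality (\ref{matr}) with $A = \mathrm{II}$ — together with the bound (\ref{maineq-end}) from Lemma \ref{cl2} to deduce (\ref{infin-1}) with $C = 1$ for this particular $K$ and $f$; (iv) conclude via Corollary \ref{equiv-const} (restricted to the one-parameter dilation family, which is closed under Minkowski combinations) and Lemma \ref{key_lemma_BM} that $\gamma(\rho K)^{1/n}$ is concave in $\rho$, hence the stated inequality. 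The one genuine computational input is Lemma \ref{cl2}, which is already proved above using the Cordero-Erasquin--Fradelizi--Maurey inequality (\ref{CFMeq}) and Lemma \ref{k_2}; everything else is bookkeeping.

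The main obstacle I anticipate is purely bookkeeping: making sure the normalization $\int_{\partial K} f\, d\mu_{\partial K} = \mu(K)$ demanded in the proof of Lemma \ref{lemma1} is consistent with the un-normalized choice $u = |x|^2/2$, $F = n - |x|^2$ used in Lemma \ref{cl2} — i.e.\ checking that $\int_{\partial K} \langle x, n_x\rangle\, d\mu_{\partial K} = \int_K F\, d\gamma = n\gamma(K) - \int_K |x|^2 d\gamma$ (which is the divergence theorem, exactly as in the proof of Lemma \ref{nowGauss}) and that scaling $u$ and $F$ by a common constant preserves inequality (\ref{maineq-end-def}) since both sides are homogeneous of degree $2$ in $u$. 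Once this scaling-invariance is noted, no further estimate is needed, and the hypothesis $\int_K x\, d\gamma(x) = 0$ is used exactly where Lemma \ref{cl2} uses it (to invoke Lemma \ref{k_2} via $\int_K \partial_{x_i}(|x|^2/2)\, d\gamma = \int_K x_i\, d\gamma = 0$).
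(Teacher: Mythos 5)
Your proposal is correct and follows essentially the same route as the paper: reduce to the infinitesimal inequality via Lemma \ref{key_lemma_BM}/Corollary \ref{equiv-const}, observe that the Neumann datum $f(x)=(a-1)\langle x,n_x\rangle$ for the dilation family is realized by the explicit solution $u=\frac{a-1}{2}|x|^2$ of $Lu=(a-1)(n-|x|^2)$, and invoke Lemma \ref{cl2} together with the degree-$2$ homogeneity of the target inequality in $u$. The scaling and normalization bookkeeping you flag is exactly what the paper dispatches with the phrase ``along with the homogeneity of (\ref{verify})'', so nothing is missing.
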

\emph{Proof:} Note that the class $\mathcal{F}$ of dilates of the same convex body is closed under Minkowski convex combinations. Recall, from the proof of Lemma \ref{key_lemma_BM}, that arbitrary $K$ and $L$ can be interpolated by a one-parameter family $\mathcal{K}(h,\psi,I)$ with $h=h_K$ and $\psi=h_L-h_K.$ Recall as well that the boundary condition in the Neumann problem we considered is given by $f(x)=\psi(n_x)=h_L(n_x)-h_K(n_x)$. In the case when $L=aK,$ we are dealing with
$$f(x)=(a-1)h_K(n_x)=(a-1)\langle x, n_x\rangle.$$
By Corollary \ref{equiv-const} and Proposition \ref{raileyprop}, we see that to verify the proposition, is suffices to show that for some $u: K\rightarrow\R$ with
\begin{equation}\label{u}
\langle \nabla u, n_x\rangle =f(x)=(a-1)\langle x, n_x\rangle,
\end{equation}
one has
\begin{equation}\label{verify}
\int_K ||\nabla^2 u||^2+\langle \nabla^2V \nabla u, \nabla u\rangle \,d\gamma \geq \int_K (Lu)^2\,d\gamma-\frac{n-1}{n\gamma(K)}\left(\int_K Lu \,d\gamma\right)^2.
\end{equation}

It remains to note that $u=\frac{a-1}{2} {|x|^2}$ satisfies (\ref{u}), and that Lemma \ref{cl2}, along with the homogeneity of (\ref{verify}), implies the validity of (\ref{verify}) for $u= \frac{a-1}{2}  |x|^2$. $\square$

\begin{remark}
{\rm
Note that Proposition \ref{dilates} implies the validity of the conjecture of Gardner and Zvavitch in dimension 1, since every pair of symmetric intervals are dilates of each other. Furthermore, directly verifying (\ref{maineq-end}) in the case $n=1$ boils down to proving the elementary inequality
$$\alpha(R)=\int_{0}^R (t^4-3t^2) e^{-\frac{t^2}{2}}dt\leq 0,$$
which follows from the fact that $\alpha(0)=\alpha(+\infty)=0$, $\alpha(R)$ decreases on $[0,\sqrt{3}]$ and increases on $[\sqrt{3},+\infty]$. It of course also follows from (\ref{CFMeq}) and (\ref{xsq}), but that would be an overkill.

It is curious to note that Lemma \ref{lemma1} is also sharp when $n=1$: for every $u:[-R,R]\rightarrow \R$ with $Lu=1$ and with the boundary condition $u'(R)=-u'(-R)$, one has
$$\beta(R)=\frac{\int_{-R}^{R} [(u'')^2+(u')^2]e^{-\frac{t^2}{2}}dt}{\int_{-R}^{R} e^{-\frac{t^2}{2}}dt}\geq 1.$$
In fact, equality is never attained unless $R=0,$ and $\lim_{{R\rightarrow 0}} \beta(R)= 1.$ A routine computation shows that $\beta(R)$ is strictly increasing in $R$, and $\lim_{{R\rightarrow\infty} } \beta(R)= \infty$. Furthermore, $\beta(R)$ grows very fast.

This indicates that our proof of Lemma \ref{lemma2} is sub-optimal, at least in the case $n=1$: we replace the term which includes $|\nabla u|^2$ with the much smaller term, while $|\nabla u|^2$ has large growth. The constant ${1}/{2}$ which we get after such replacement is attained when $R=\infty$, and in fact the estimate decreases as $R$ increases, contrary to the actual behavior of $\beta(R).$
}
\end{remark}

\begin{remark}\label{TN}{\rm Consider $p=0.5C_0(\gamma_2,\mathcal{F}_o)$, that is the largest number such that for any pair of convex sets $K$ and $L$ in $\R^2$ containing the origin, and for any $\lambda\in [0,1]$,
$$\gamma(\lambda K+(1-\lambda)L)^p\geq \lambda \gamma(K)^p+(1-\lambda)\gamma(L)^p.$$
Nayar and Tkocz \cite{NaTk} showed that $p<0.5$. Furthermore, using their argument, one may observe that $p\leq 1-\frac{2}{\pi}\approx 0.363.$ Our results imply that $p\geq 0.25.$ Therefore, 
$$p\in [0.25,0.363].$$

We would like to add that our proof, in fact, indicates a bit more:  
\begin{equation}\label{est}
p\geq \inf_{K\in \mathcal{F}_o}\frac{1}{\gamma(K)}\int_K \frac{1}{x^2+2}d\gamma(x).
\end{equation}
A numerical computation shows that
\begin{equation}\label{est-1}
\frac{1}{2\pi}\int_{\R^2} \frac{e^{-\frac{x^2}{2}}dx}{x^2+2}\approx 0.298.
\end{equation}
Using Caffarelli's contraction theorem \cite{Caf}, one may observe that for any symmetric (and not just containing the origin) convex set $K$ in $\R^2$,
\begin{equation}\label{est-2}
\frac{1}{\gamma(K)}\int_K \frac{1}{x^2+2}d\gamma(x)\geq 0.298.
\end{equation}
Indeed, Caffarelli's theorem guarantees the existence of a 1-Lipschitz map $T$ which pushes forward the Gaussian measure to its restriction on $K$. In case $K$ is symmetric, one has $T(0)=0,$ and the 1-Lipschitz property yields $|T(x)|=|T(x)-T(0)|\leq |x-0|=|x|$. Hence the normalized integral from (\ref{est-2}) is greater than the corresponding integral over $\R^2$. 

However, this does not provide an insight into calculating the infimum from (\ref{est}), which runs over the class of all convex sets containing the origin. In any case, this infimum does not exceed $0.298$, and therefore it is certainly smaller than $0.363$.

In conclusion, unfortunately, combining our estimate with the example of Nayar and Tkocz \cite{NaTk}, one may not determine the value of $p$ explicitly.}
\end{remark}

\end{document}